\newcommand\gry{\cellcolor{black!10!white}}
\definecolor{grey}{RGB}{132, 132, 132}
\date{}
\definecolor{brown}{RGB}{255, 0, 255}
\title{Do $K_{3,3}$-Free Latin Squares Exist?%
}
\author{Aleksandr D. Krotov%
\thanks{Yandex, Moscow 119021, Russia.
E-mail: san.crot@yandex.ru}%
,
Denis S. Krotov%
\thanks{School of Mathematical Sciences, Hebei Key Laboratory of Computational Mathematics and Applications, Hebei
Normal University, Shijiazhuang 050024, P. R. China}%
\thanks{Sobolev Institute of Mathematics, Novosibirsk 630090, Russia. E-mail: dk@ieee.org}%
}
\newtheorem{lemma}{Lemma}
\newtheorem{proposition}{Proposition}
\newtheorem{theorem}{Theorem}
\newtheorem{corollary}{Corollary}
\theoremstyle{remark}
\newtheorem{remark}{\bf Remark}
\newtheorem{problem}{\bf Problem}
\begin{document}

\maketitle

\begin{abstract}
We discuss the problem of existence of latin squares without a substructure consisting of six elements $(r_1,c_2,l_3)$,  $(r_2,c_3,l_1)$,  $(r_3,c_1,l_2)$, $(r_2,c_1,l_3)$,  $(r_3,c_2,l_1)$,  $(r_1,c_3,l_2)$. Equivalently, the corresponding latin square graph does not have an induced subgraph isomorphic to $K_{3,3}$. The exhaustive search [Brouwer, Wanless. Universally noncommutative loops. 2011] says that there are no such latin squares of order $3$, $4$, $5$, $6$, $7$, $9$, $10$, $11$ and there are only two $K_{3,3}$-free latin squares of order $8$, up to equivalence. We repeat the search, establishing also the number of latin $m$-by-$n$ rectangles for each $m$ and $n$ less than or equal to $11$. As a switched combination of two orthogonal latin squares of order $8$, we construct a $K_{3,3}$-free (universally noncommutative) latin square of order $16$.

We also consider a similar problem for orthogonal latin squares, proving that there are both $K_{4,4}$-free and non-$K_{4,4}$-free linear pairs of orthogonal latin squares
for each odd prime-power order larger than~$5$.

Keywords: latin square; transversal; trade; pattern avoiding; eigenfunction; universally noncommutative loop.
\end{abstract}

MSC2010: 05B15; 05E30.
\section{Introduction}


An $m \times n$ \emph{latin rectangle} (if $m=n$, a \emph{latin square} of order $n$) 
$S$
is a set of $mn$ ordered triples from 
${R} \times {C} \times {L}$, 
where ${R}$, ${C}$, ${L}$ are some sets such that 
$m=|{R}|\le |{C}| = |{L}|=n$, 
such that no two triples from $S$ differ in exactly one position.
Without loss of generality, one can take ${R}=\{0,\ldots,m-1\}$ and ${C}={L}=\{0,\ldots,n-1\}$;
however, sometimes it is convenient to construct latin rectangles or squares 
with specially chosen sets $R$, $C$, $L$.
It is typical to imagine a latin rectangle (square) $S$ as an $m\times n$ 
table whose rows are indexed by ${R}$, columns are indexed by ${C}$, 
and each cell $(r,c)$, $r\in{R}$, $c\in{C}$, 
contains an element $l$ from ${L}$ (we will write $S(r,c)=l$ in this case)
if $(r,c,l) \in S$. Two latin squares $A$ and $B$
are called 
\emph{orthogonal} if
$(r,c,l),(r',c',l)\in A$ and $(r,c,g),(r',c',g)\in B$
imply $(r,c) = (r',c')$.

Given a latin rectangle (square) $S$, the graph $\Gamma(S)$ (if $m=n$, a \emph{latin square graph})
has $S$ as the vertex set, where two different vertices $(r,c,l)$, $(r',c',l')$ are adjacent if and only if $r=r'$, $c=c'$, or $l=l'$.
A set of $n$ vertices having the same value in the first position forms a clique of $\Gamma(S)$, 
which is referred to as a \emph{row} of the latin rectangle (square) $S$.
Similarly, fixing the value in the second (third) position,
we obtain a clique of size $m$, called a  \emph{column} (respectively,  \emph{letter}) of~$S$.

Two latin rectangles (squares) $A\subset {R} \times {C} \times {L}$ and $A'\subset {R'} \times {C'} \times {L'}$ are \emph{isotopic} if there are bijections $\rho:R\to R'$, $\gamma:C\to C'$, and $\lambda:L\to L'$
such that $A' = \{(\rho r,\gamma c, \lambda l)\,:\,(r,c,l) \in A\}$.
Two latin rectangles (squares) $A$, $A'$ are \emph{paratopic} if 
$A'$ is isotopic to one of the six \emph{conjugates} 
(also called \emph{parastrophes})
of $A$: 
\begin{eqnarray*}
A_{0,1,2} &=& A, \\
A_{0,2,1} &=& \{(r,l,c)\,:\,(r,c,l) \in A\}, \\
A_{1,0,2} &=& \{(c,r,l)\,:\,(r,c,l) \in A\}, \\
A_{2,0,1} &=& \{(c,l,r)\,:\,(r,c,l) \in A\}, \\
A_{2,1,0} &=& \{(l,c,r)\,:\,(r,c,l) \in A\}, \\
A_{1,2,0} &=& \{(l,r,c)\,:\,(r,c,l) \in A\}
\end{eqnarray*}
(of course, in the nonsquare case, only $A_{0,1,2}$ or $A_{0,2,1}$ can be isotopic to $A$; the other four are not latin rectangles, formally).

The isotopy and paratopy are well-defined equivalences;
the corresponding equivalence classes of latin rectangles (squares) are called
the \emph{isotopy classes} and the \emph{main classes}.
The group of all isotopisms (i.e., triples $(\rho,\gamma,\lambda)$, with composition as the group operation) that
send a latin square~$A$ to itself, 
$$A = \{(\rho r,\gamma c, \lambda l)\,:\,(r,c,l) \in A\},$$
is called the \emph{autotopism group}
of~$A$.
The group of all paratopisms (i.e., quadruples 
$(\rho,\gamma,\lambda,\pi)$ where $\pi$ is a permutation of $\{0,1,2\}$) that 
send a latin square~$A$ to itself, 
$$A = \{(\rho r,\gamma c, \lambda l)\,:\,(r,c,l) \in A_{\pi(0),\pi(1),\pi(2)}\},$$
is called the \emph{paratopism group} of~$A$.
Note that two latin squares are paratopic if and only if their graphs are isomorphic, 
while this is not always true for $m\times n$ latin rectangles
if $2m\le n$.

We say that a latin  rectangle (square) $S$ is \emph{$K_{3,3}$-free} 
if $\Gamma(S)$ has no induced subgraph isomorphic to $K_{3,3}$,
the complete bipartite graph with two parts of size~$3$.
It is easy to see the following proposition, which illustrates the fact that 
the {$K_{3,3}$-free} rectangles are the latin rectangles that avoid the pattern
 $$ \begin{array}{ccc}
 \cdot & \mathbf C & \mathbf B \\
  \mathbf C  & \cdot  & \mathbf A \\
  \mathbf B  & \mathbf A & \cdot
 \end{array}\qquad $$
 in any three rows and three columns (the order of those rows and columns can be different from the order of the corresponding rows and columns in the pattern).
\begin{proposition}\label{p:six}
Six distinct elements $x$, $x'$, $x''$, $y$, $y'$, $y''$ of a latin rectangle $S$ induce a complete bipartite
subgraph of  $\Gamma(S)$ with parts $\{x, x', x''\}$ and $\{y, y', y''\}$ if and only if
 $x=(r_1,c_2,l_3)$,  $x'=(r_2,c_3,l_1)$,  $x''=(r_3,c_1,l_2)$, and
$$\{y, y', y''\}=\{(r_2,c_1,l_3), (r_3,c_2,l_1), (r_1,c_3,l_2)\}$$
for some $r_1$, $r_2$, $r_3$, $c_1$, $c_2$, $c_3$, $l_1$, $l_2$, $l_3$.
\end{proposition}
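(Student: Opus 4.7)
My plan is to handle the two implications separately. The ``if'' direction is a pure coordinate inspection: within each of $\{x,x',x''\}$ and $\{y,y',y''\}$ the three rows, the three columns, and the three letters are all distinct, so no edge of $\Gamma(S)$ lies inside a part; and for each of the nine cross-pairs, exactly one coordinate coincides (three pairs share a row, three share a column, three share a letter). This is routine enumeration.

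For the ``only if'' direction, I first use the independence of $\{x,x',x''\}$ in $\Gamma(S)$ to write the three triples, after an appropriate relabeling of rows, columns, and letters, as $x=(a_1,b_1,d_1)$, $x'=(a_2,b_2,d_2)$, $x''=(a_3,b_3,d_3)$. Next I analyze a single $y$: the latin property (any two distinct triples of $S$ share at most one coordinate) combined with adjacency of $y$ to each of $x,x',x''$ forces $y$ to share exactly one coordinate with each; and these three shared coordinates must be of three different kinds, for otherwise two of the $x$'s would share a coordinate. Hence $y=(a_{\pi(1)},b_{\pi(2)},d_{\pi(3)})$ for some permutation $\pi$ of $\{1,2,3\}$.

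Applied to each of $y,y',y''$, this yields three permutations $\pi_1,\pi_2,\pi_3\in S_3$, and the independence condition inside $\{y,y',y''\}$ translates into $\pi_i\pi_j^{-1}$ being a derangement (fixed-point-free) for every $i\ne j$. Since the only derangements in $S_3$ are the two $3$-cycles, the three $\pi$'s must lie in a single coset of the alternating subgroup $\{e,(1\,2\,3),(1\,3\,2)\}$ and, being three in number, exhaust it; so either the $\pi$'s are the three cyclic permutations, or they are the three transpositions. In the first (cyclic) case, a relabeling such as $c_1:=b_3$, $c_2:=b_1$, $c_3:=b_2$, $l_1:=d_2$, $l_2:=d_3$, $l_3:=d_1$ turns the six triples into exactly the claimed expressions. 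In the transposition case, I swap the roles of the two parts: re-indexing so that the original $y$'s play the role of $x,x',x''$ produces an instance of the cyclic case, which is then handled as before. The main subtlety, and the only place I expect any real care to be needed, is keeping this two-case split and the cyclic shifts of the indexing straight; once the derangement condition is seen to force a single coset of the alternating subgroup, the rest is bookkeeping.
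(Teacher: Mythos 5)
Your proof is correct in substance, but note that the paper offers no argument at all for this proposition (it is introduced with ``it is easy to see''), so there is no authorial proof to compare against; your write-up is a legitimate filling-in of that gap. Your key organizing device --- encoding each $y$ as a permutation $\pi\in S_3$ via $y=(a_{\pi(1)},b_{\pi(2)},d_{\pi(3)})$, translating independence of $\{y,y',y''\}$ into the derangement condition, and concluding that the three permutations form a coset of $A_3$ --- is clean and does make the ``bookkeeping'' transparent. Two small points deserve explicit mention. First, in the ``if'' direction the pairwise distinctness of $r_1,r_2,r_3$ (and of the $c_i$ and the $l_i$) is not free: it must be derived from the fact that the six triples are distinct elements of $S$ together with the defining property that two triples of $S$ cannot differ in exactly one position (e.g.\ $r_1=r_2$ would force $(r_1,c_2,l_3)$ and $(r_2,c_1,l_3)$ to coincide or to violate that property); one sentence fixes this. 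Second, your two-case split is genuinely needed and in fact exposes a slight imprecision in the statement itself: the displayed parametrization is invariant under cyclic relabelings of $(x,x',x'')$ but not under transpositions, so in your ``transposition'' case the form is attained only after exchanging the roles of the two parts (equivalently, after an odd relabeling within one part). Your reduction by swapping the parts is the right move and proves the intended statement --- namely that the six vertices, with a suitable assignment of the labels, admit the displayed form --- but you should say explicitly that this is the sense in which the conclusion is being established, since for the literal labels given in the hypothesis the parameters $r_i,c_i,l_i$ are completely determined by $x,x',x''$ and need not reproduce $\{y,y',y''\}$.
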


The current note is devoted to the following question, 
discussed at the open-problem section of
the International Conference and PhD-Master Summer School on Graphs and Groups, Representations and Relations (August 06-19, 2018, Novosibirsk, Russia).

\begin{problem}
 For which $n$ are there $K_{3,3}$-free latin squares of order~$n$?
\end{problem}

The problem has been already studied in~\cite{BroWan:2011}
(in a different context 
of universally noncommutative quasigroups,
which is essentially the same 
as $K_{3,3}$-free latin squares), 
where it was established computationally 
that for $3\le n\le 11$, there are only two main classes of such squares, both with $n=8$.
In this note, we repeat the calculation, paying more attention
to intermediate results, in particular, to $K_{3,3}$-free latin rectangles,
and make a step forward by constructing a $K_{3,3}$-free latin square
of order~$16$.

The following elementary proposition states that the value tables of finite groups
are not $K_{3,3}$-free (trivially, such table is a latin square).

\begin{proposition}
 Given a finite group $(R,\cdot)$ of order at least~$3$, the value table $S=\{(r,c,r\cdot c)\mid r,c \in R\}$ is not $K_{3,3}$-free.
\end{proposition}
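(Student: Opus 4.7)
My plan is to apply the preceding proposition: to exhibit an induced $K_{3,3}$ in $\Gamma(S)$ it suffices to produce $r_1,r_2,r_3,c_1,c_2,c_3,l_1,l_2,l_3\in R$ (pairwise distinct within each of the three triples) satisfying $r_1c_2=r_2c_1=l_3$, $r_2c_3=r_3c_2=l_1$, and $r_3c_1=r_1c_3=l_2$. I will reduce this to finding a single commuting pair in $R$ and then verify the existence of such a pair by a short case analysis.

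\emph{Algebraic reduction.} Writing $a=r_2^{-1}r_1$ and $b=r_3^{-1}r_2$ and rearranging, the three row-column equalities become $c_1=ac_2$, $c_3=b^{-1}c_2$, and $a=bab^{-1}$; the last of these is precisely $ab=ba$. Normalizing by $r_3=c_2=e$ then pins down $r_1=ba$, $r_2=b$, $c_1=a$, $c_3=b^{-1}$, $l_1=e$, $l_2=a$, $l_3=ba$. Direct substitution verifies each of the six triples lies in $S$; commutativity enters at exactly one step, namely in checking $(r_1,c_3,l_2)=(ba,b^{-1},a)$, where one needs $ba\cdot b^{-1}=a$. The pairwise-distinctness conditions on the $r_i$, $c_j$, and $l_k$ all collapse to the trio $a\ne e$, $b\ne e$, $ab\ne e$.

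\emph{Finding such $a,b$.} If $R$ contains any element $g$ of order at least $3$, I will take $a=b=g$, giving $ab=g^2\ne e$. Otherwise every nonidentity element of $R$ has order $2$, so $R$ is elementary abelian of exponent $2$; since $|R|\ge 3$ and $|R|$ is a power of $2$, in fact $|R|\ge 4$, and any two distinct nonidentity elements $a\ne b$ commute and satisfy $ab\ne e$ (because $a\ne b=b^{-1}$). The only point requiring real care is this case split, which is forced by the existence of exponent-$2$ groups; apart from that, the argument is routine group algebra together with an appeal to the preceding proposition.
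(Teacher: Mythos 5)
Your proof is correct and follows essentially the same route as the paper: both arguments reduce the existence of an induced $K_{3,3}$ in the group table to the existence of a commuting pair of elements subject to mild non-degeneracy conditions, and then produce such a pair by an elementary case analysis. The differences are only in the normalization --- the paper asks for commuting $x\ne y$ with $x,y\ne e$ (handling involution-free choices via $y=x^{-1}$ and the all-involution case via the trick $x=ab$, $y=ba$) and writes the six cells $(e,x,x)$, $(e,y,y)$, $(x,e,x)$, $(x,y,xy)$, $(y,e,y)$, $(y,x,xy)$ directly, whereas you ask for commuting $a,b$ with $a,b,ab\ne e$ (so an element of order at least $3$ settles it immediately, and the exponent-$2$ case follows from such groups being abelian).
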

\begin{proof}
 First, we note that the group cobtains
 two distinct nonidentity commuting elements $x$ and~$y$.
 Indeed, take two different nonidentity elements $a$,~$b$.
 If $a\ne a^{-1}$, then $x=a$ and $y=a^{-1}$ satisfy the requirement; 
 if $b\ne b^{-1}$, we take $x=b$, $y=b^{-1}$.
 If $a\cdot b=b\cdot a$, we take $x=a$, $y=b$. 
 If $a= a^{-1}$, $b=b^{-1}$, and $a\cdot b \ne b\cdot a$, we take
 $x=a\cdot b$, $y=b\cdot a$.
 
 For such $x$ and $y$, the six triples 
 $(\mathrm{id},x,x)$, 
 $(\mathrm{id},y,y)$, 
 $(x,\mathrm{id},x)$, 
 $(x,y,x\cdot y)$, 
 $(y,\mathrm{id},y)$, 
 $(y,x,x\cdot y)$ induce a $K_{3,3}$-subgraph of $\Gamma(S)$.
\end{proof}

In Section~\ref{s:motiv}, we show the place of Problem~1 in the general context of trades
of combinatorial configurations, or, more specifically, of transversal trades in latin squares, and in the context of studying the minimum supports of eigenfunctions of graphs.
In Section~\ref{s:9}, we report the results of an exhaustive search that
repeats the results of~\cite{BroWan:2011} saying that among
the latin squares of order from~$3$ to~$11$, 
there are only two nonparatopic $K_{3,3}$-free latin squares, both of order~$8$.
Moreover, we count the $K_{3,3}$-free latin $m\times n$ rectangles for all~$m$ and~$n$
not more than~$11$ and, in particular, find that 
$K_{3,3}$-free latin $(n-1)\times n$ rectangles are also very rare objects.
Apart from the order~$8$, we have found only one $4\times 5$ rectangle
and one $8\times 9$ rectangle, both with nice cyclic structures.
In Section~\ref{s:16}, trying to generalize the $8\times 8$ examples,
we construct a $K_{3,3}$-free $16\times 16$ square
as a switched combination of two orthogonal $8\times 8$ latin squares.
For every pair of orthogonal $8\times 8$ latin squares, an admissible switching
to combine them into a $K_{3,3}$-free $16\times 16$ square is searched
as a solution of a system of linear equations over GF$(2)$, and it is notable
that only one pair of such squares (among several thousand) admits a solution.
For every order larger than~$11$ and different from~$16$,
the problem of the existence of a $K_{3,3}$-free latin square remains open.
In Section~\ref{s:ort},
we briefly discuss a similar problem for orthogonal latin squares.

\section{Motivation}\label{s:motiv}
\subsection{Clique designs and trades; transversals}\label{s:trades}

Assume we have a regular graph $\Gamma$ with a nonempty collection~$A$ of cliques such that every edge lies in a constant number of cliques from $A$.
We say that a set $T$ of vertices is a \emph{clique design} (more generally, a \emph{$\lambda$-fold clique design}) if every clique from $A$ contains exactly $1$ (respectively, exactly $\lambda$) elements of $T$.
A pair $(T_+,T_-)$ of nonempty disjoint vertex sets of $\Gamma$ is called a \emph{clique trade} (alternatively, bitrade) if for every clique $a$ from $A$ one has 
$|a\cap T_+|=|a\cap T_-|$.
Sometimes, an additional condition $|a\cap T_+|\le 1$
is required.
The values $|T_+|$ and $|T_-|$ coincide and are called the \emph{volume}
of the trade. 
If $T'$ and $T''$ are ($\lambda$-fold) clique designs, then the pair
$(T_+,T_-)$, where $T_+=T'\backslash T''$, $T_-=T''\backslash T'$, 
is a clique trade.
In this case, one can say that $T''$ is obtained from $T'$ by switching the trade 
$(T_+,T_-)$.
However, not every clique trade can be represented as the difference pair of two clique designs, and studying trades is of independent interest even for graphs where clique designs do not exist.

The concept of clique designs covers many classes of combinatorial configurations, such as $t$-$(v,t+1,\lambda)$ designs (in Johnson graphs) 
and their subspace analogs (in Grassmann graphs), distance-$2$ MDS codes (equivalently, latin hypercubes or multiary quasigroups; in Hamming graphs),
binary $1$-perfect codes (in halved hypercube graphs), distance-$2$ MRD codes
(in bilinear forms graphs), Cameron--Liebler line classes (in the complement of a diameter-$2$ Grassmann graph). 
Studying the corresponding trades is very important in the study of each concrete class of combinatorial configurations.
In many cases, good lower bounds on the number of different configurations 
of the same parameters are based on switching independent trades (see e.g. examples in the survey \cite{Krotov:trades}).
Such approach is more effective when one switches trades of small volume
rather than of large one.
From this point of view, an important question is how small a trade can be in a concrete graph (with a given system of cliques) or in some class of graphs.

If $\Gamma$ is a latin square graph and the clique system $A$ 
consists of the rows, 
the columns, and the letters, 
then the clique designs are known as the 
\emph{transversals} of the corresponding latin square
(the $\lambda$-fold clique designs are called \emph{$\lambda$-plexes}).
It is natural to call the clique trades the \emph{transversal trades}, 
in this case.
It is not difficult to find that 
the volume of a transversal trade cannot be $1$ or $2$ but can be equal to $3$
if and only if the latin square graph
has an induces $K_{3,3}$-subgraph.
Based on switching transversal trades of volume $3$, the best known
constructive asymptotic lower bound on the number of transversals 
in a latin square was established in \cite{Pot:2016:trans}.

In the context described above, Problem 1 can be considered as a particulal question concerning the study
of the spectrum of volumes of transversal trades (or, in general, clique trades).

\subsection{Minimum supports of eigenfunctions}\label{s:eigen}

An \emph{eigenfunction} corresponding to an \emph{eigenvalue} $\theta$ 
of a graph $\Gamma=(V,E)$ is a real-valued function
$f:V\to\mathbb{R}$ such that 
$\sum_{y:\{x,y\}\in E} f(y)=\theta  f(x)$ 
for all~$x$ from~$V$ and $f\not\equiv 0$.
Many classes of combinatorial configurations and their trades are representable
as functions with a pure eigenspectrum; often (e.g., in all examples mentioned in Sction~\ref{s:trades}), as eigenfunctions with some discrete restrictions on the values.
In this context, the study of the smallest volume of trades motivates the study
of the smallest support of eigenfunctions, for given graph and eigenvalue.
For different graphs, this question was considered in
\cite{KMP:16:trades},
\cite{Sotnikova:2018}, \cite{Val:min2}, \cite{VorVal:Hamm}, \cite{VMV:Johnson}, \cite{GKSV:Paley},
\cite{Sotnikova:bil}, \cite{SotVal:survey2021}.

The general results of \cite{KMP:16:trades} for distance-regular graphs, applied to a latin square graph, imply the following.
\begin{proposition}\label{p:6}
 An eigenfunction corresponding to the {eigenvalue} $-3$ of a latin square graph has at least $6$ nonzeros. Moreover, 
 if the number of nonzeros  is exactly $6$, 
 then they induce a subgraph 
 isomorphic to $K_{3,3}$.
\end{proposition}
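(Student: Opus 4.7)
The plan is to exploit the decomposition of the adjacency matrix of $\Gamma(S)$ as $A = A_R + A_C + A_L$, where each summand is the adjacency matrix of one of the three clique covers (rows, columns, letters), i.e.\ a disjoint union of $n$ copies of $K_n$. At a vertex $v$, the eigenvalue equation $Af = -3f$ rewrites as the local identity
\[
 s_{R_v} + s_{C_v} + s_{L_v} = 0,
\]
where $s_K := \sum_{u \in K} f(u)$ and $R_v, C_v, L_v$ are the row, column and letter cliques through $v$.

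The first step is to show that \emph{every} row, column and letter sum of $f$ vanishes. I would multiply the local identity above by $f(v)$ and sum over $v$; regrouping by cliques turns the three contributions into $\sum_R s_R^2 + \sum_C s_C^2 + \sum_L s_L^2$, which must therefore equal $0$. As a sum of squares it forces each $s_K$ to be $0$. Consequently, any row, column or letter that meets $\operatorname{supp} f$ meets it in at least two cells, because a single-cell intersection would contribute a nonzero clique sum.

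With this ``$\ge 2$ or $0$'' intersection rule I would exclude $|\operatorname{supp} f|\in\{1,\dots,5\}$ by case analysis on the partitions of $|\operatorname{supp} f|$ induced by rows, columns, and letters: each partition has parts $\ge 2$, and the latin incidence constraint ``any row and column share at most one support cell'' forces shape incompatibilities. For example $|\operatorname{supp} f|=5$ allows only row shape $\{3,2\}$ or $\{5\}$; the shape $\{5\}$ puts all cells in one row, so every column meets the support in a singleton; the shape $\{3,2\}$ caps each column at two support cells, but $5$ is not expressible as a sum of $2$'s. The smaller cases $1,2,3,4$ fail by the same sort of bookkeeping.

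For $|\operatorname{supp} f|=6$, the same analysis forces all three partitions to be $\{2,2,2\}$, so the support meets exactly three rows, three columns, and three letters in two cells each. The $2$-regular bipartite graph between those three rows and three columns is necessarily a hexagon, whence $\operatorname{supp} f$ is the complement of a transversal inside a $3\times 3$ subgrid. The letter partition then groups the six cells into three same-letter pairs on distinct rows and columns; enumerating the few valid pairings and solving the corresponding homogeneous system of nine sum-zero equations in six unknowns shows that only the pairing matching the pattern of Proposition~1 admits a nontrivial $f$, and its induced subgraph is $K_{3,3}$ by construction. I expect this last enumeration --- verifying that every other label pairing kills the would-be eigenfunction --- to be the main, though completely elementary, obstacle.
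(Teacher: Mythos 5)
Your argument is essentially correct, but it is a genuinely different route from the paper's: the paper gives no self-contained proof of Proposition~\ref{p:6} at all, deducing it instead from the general weight-distribution bounds for eigenfunctions of distance-regular graphs in the cited reference \cite{KMP:16:trades} (a latin square graph is strongly regular with smallest eigenvalue $-3$, and the general theorem specializes to the stated bound and the $K_{3,3}$ characterization). Your direct approach --- writing $A=(P_R-I)+(P_C-I)+(P_L-I)$ so that the eigenvalue equation becomes $s_{R_v}+s_{C_v}+s_{L_v}=0$, then using $f^{T}(P_R+P_C+P_L)f=\sum_K s_K^2=0$ to conclude that every row, column and letter sum vanishes --- is sound and buys a completely elementary, self-contained proof at the cost of a case analysis that the general machinery hides; conversely, the paper's route generalizes immediately (as they use in Section~\ref{s:ort}) to nets coming from $k$ MOLS and eigenvalue $-k-2$. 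One correction to your write-up: the cases of support size $4$, and the row shape $\{3,3\}$ for support size $6$, are \emph{not} killed by shape bookkeeping alone. A $2\times2$ subgrid whose two letter classes are the two diagonals (an intercalate) is shape-consistent with all parts $\geq 2$, as is a $2\times3$ subgrid whose three letter pairs realize a $3$-cycle on the columns; both must be excluded by actually solving the sum-zero relations (in the first case the row and column sums force the alternating $\pm a$ pattern and the letter sum gives $2a=0$; in the second the letter relations force $f$ constant on a row whose sum is zero). Since this is exactly the computation you already perform in the final enumeration, the fix is routine, but as stated the phrase ``fail by the same sort of bookkeeping'' overclaims. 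The final step --- the $6$-cycle structure forcing the support to be a $3\times3$ subgrid minus a transversal, the unique $\pm1$ solution, and the conclusion that each letter pair must join a $+1$ cell to a $-1$ cell in distinct rows and columns, yielding precisely the configuration of Proposition~1 --- is correct as you describe it.
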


So, Problem 1 is a special case of the minimum-support problem for eigenfunctions of graphs.
Moreover, in the case of strongly regular graphs,
the eigenfunctions with the number of nonzeros attaining the so-called
weight-distribution lower bound correspond to induced $K_{s,s}$-subgraphs,
where $s$ is derived from the parameters of the strongly regular graph and the eigenvalue. In a sequence of papers
\cite{GKSV:Paley,GShY:2023,GorYip:2024,DeBruyn2025,EvGoSh:2025}, it is proved that the weight-distribution bound is attained for different families of strongly regular graphs. $K_{3,3}$-free latin squares and $K_{4,4}$-free pairs of orthogonal latin squares
(constructed in Section~\ref{s:ort}) give examples of strongly regular graphs
whose eigenfunctions do not attain the weight-distribution lower bound on the cardinality of the support.

\begin{remark}
 If one treats an eigenspace of a graph as a linear code,
then the minimum size of the support of an eigenfunction from this space 
is the minimum Hamming distance of this code.
\end{remark}

\section{Small orders}\label{s:9}

In this section, we report the results of a computer search of the $K_{3,3}$-free latin rectangles
of order at most~$11$.

We use the standard algorithm of classification of latin rectangles, see e.g. \cite{KO:alg}, 
adapted to the $K_{3,3}$-free case.
We first describe the general approach, without the $K_{3,3}$-free condition.
The algorithm is recursive in $m$;
for each given $m\times n$ latin rectangle $S_{m,n}\subset R_m\times C \times L$,  
it produces all possible continuations to a latin rectangle of size $(m+1)\times n$.
\begin{itemize}
\item[(i)] Let $m$ be the new row index, $m\not\in R_m$. 
 Among all $n^2$ elements $(m,c,l)$, $c\in C$, $l\in L$,
 we choose $(n-m)n$ elements $(m,c,l)$, \emph{candidates}, such that 
 $c$ and $l$ do not occur in any triple $(r,c,l)$ from $S_{m,n}$.
\item[(ii)] On the set of candidates, we define the compatibility graph $\Delta$ as follows.
 Two vertices $(m,c',l')$, $(m,c'',l'')$ are compatible (i.e., adjacent in $\Delta$)
 if and only if $c'\ne c''$ and $l'\ne l''$.
\item[(iii)] Using the special software 
(e.g., \texttt{cliquer} \cite{cliquer})
 we find all cliques~$Q$ of size~$n$ in the graph~$\Delta$.
 The unions $S_{m,n}\cup Q$ are all possible $(m+1)\times n$
 latin rectangles that continue $S_{m,n}$.
\item[(iv)] Using the software \cite{nauty2014} for dealing with graph isomorphism,
we find all nonparatopic representatives of the found latin rectangles, to use them at the next iterative step or to represent the result of the classification.
\end{itemize}
It happens that this approach can be easily adapted to classify the $K_{3,3}$-free latin rectangles.
Indeed, if six elements of a latin rectangle $S_{m+1,n}$ form a forbidden pattern 
(i.e., induce $K_{3,3}$ in $\Gamma(S_{m+1,n})$), then either they all lie in the first $m$ rows, 
or exactly two of them belong to the last row. The first possibility is not possible if the latin rectangle $S_{m,n}$, constructed at the previous iteration, is $K_{3,3}$-free.
To avoid the second possibility, we forbid the corresponding pairs in the last row by excluding them from the edge set of~$\Delta$. So, we replace~(ii) by
\begin{itemize}
\item[(ii')] On the set of candidates, we define the compatibility graph~$\Delta$ as follows.
 Two vertices $(m,c',l')$, $(m,c'',l'')$ are compatible (i.e., adjacent in~$\Delta$)
 if and only if $c'\ne c''$, $l'\ne l''$,
 \emph{and, moreover,  
 there are no $c$, $r'$, $r''$
 such that $(r',c,l'),(r'',c,l'')\in S_{m,n}$
 and $S_{m,n}(r',c'')=S_{m,n}(r'',c')$.}
\end{itemize}
Applying (i)-(ii')-(iii)-(iv) to representatives
$S_{m,n}$ of the main classes of
$K_{3,3}$-free  $m\times n$ latin rectangles,
we obtain representatives of all main classes
of
$K_{3,3}$-free $(m+1)\times n$ latin rectangles.
If the resulting set is nonempty, the standard approach \cite[10.2]{KO:alg}
based on the orbit-stabilizer 
theorem is applied to validate the results.
The approach is based on the fact that the total number of latin rectangles
from the considered class (see Table~\ref{t:3-10}) can be found in two ways:
firstly, using the total number of solutions found at step~(iii)
and the orders of the paratopism groups of the representatives~$S_{m,n}$;
and secondly, using only nonparatopic solutions (step~(iv))
and the orders of their paratopism groups.
Many kinds of random or systematical errors in the calculation result in
the disagreement in the values found with these two ways; 
conversely, agreement between these values indicates
that the probability of an error is convincingly low.

Table~\ref{t:3-9} shows the number of nonparatopic
$K_{3,3}$-free latin rectangles for small values of~$n$.
\begin{table}
 $$
\begin{array}{|c|*{10}{@{\ }c@{\ }|}}\hline
 m
   & n{:}\ 3 & 4 & 5 & 6 & 7 & 8 & 9 & 10 & 11 & 12 \\ \hline
 3 & 0 & 0 & 1 & 4 & 12 & 67  & 418   & 3871     & 41085 & 500842 \\[-0.9ex]
   & \scriptstyle(0) & \scriptstyle(0) & \scriptstyle(1) & \scriptstyle(4) & \scriptstyle(15) & \scriptstyle(92)  & \scriptstyle(692)   &  \scriptstyle(7067)  &   \scriptstyle(79254)  &  \scriptstyle(987170) \\
 4 &   & 0 & 1 & 5 & 15 & 412 & 21493 & 2365871  & 319053308     & 51526117503 \\[-0.9ex]
   &   & \scriptstyle(0) & \scriptstyle(1) & \scriptstyle(6) & \scriptstyle(24) & \scriptstyle(701) & \scriptstyle(42127) &  \scriptstyle(4719624) &  \scriptstyle(637928108)    & \scriptstyle(103049459851) \\
 5 &   &   & 0 & 0 & 3  & 100 & 13123 & 12140467 & 17661934154 & \\[-0.9ex]
   &   &   & \scriptstyle(0) & \scriptstyle(0) & \scriptstyle(5)  & \scriptstyle(139) & \scriptstyle(25492) &  \scriptstyle(24247157) & \scriptstyle(\text{35322291492})      & \\
 6 &   &   &   & 0 & 0  & 19  & 134   & 388165   & 2846897941 &  \\[-0.9ex]
   &   &   &   & \scriptstyle(0) & \scriptstyle(0)  & \scriptstyle(25)  & \scriptstyle(254) & \scriptstyle(772702) & \scriptstyle(\text{5693554556}) &  \\
 7 &   &   &   &   & 0  & 3   & 5     & 17121    & 1766681 & \\[-0.9ex]
   &   &   &   &   & \scriptstyle(0)  & \scriptstyle(4)   & \scriptstyle(8)     &  \scriptstyle(33571) & \scriptstyle(\text{3526719}) & \\
 8 &   &   &   &   &    & 2   & 1     & 2036     &  192     & \\[-0.9ex]
   &   &   &   &   &    & \scriptstyle(4) & \scriptstyle(1)  &  \scriptstyle(3867)  & \scriptstyle(\text{352}) & \\
 9 &   &   &   &   &    &     & 0     & 0        & 0      & \\\hline
\end{array}
$$
\caption{The number of main classes (isotopy classes) of $K_{3,3}$-free $m\times n$ latin rectangles, $n\le 11$}
\label{t:3-9}
\end{table}
 \begin{table}
 $$
\begin{array}{|c||c|c|c||c||c|c|c|c|c|}\hline
 m & n=5                 & n=6                   & n=7                       \\ \hline
 3 & \scriptstyle 14400  & \scriptstyle 3110400  & \scriptstyle 1270080000
 \\[-0.3em] 
 4 & \scriptstyle 28800  & \scriptstyle 12441600 & \scriptstyle 10059033600
 \\[-0.3em] 
 5 & \scriptstyle 0      & \scriptstyle 0        & \scriptstyle 6706022400       \\[-0.3em]
\end{array}
$$
 $$
\begin{array}{|c||c|c|c|c|c|c|c|c|c|}\hline
 m & n=8                         & n=9                             & n=10      \\ \hline
 3 & \scriptstyle 622644019200   & \scriptstyle 496045696204800    & \scriptstyle 535089377894400000     \\[-0.3em]
 4 & \scriptstyle 20105061580800 & \scriptstyle 129602900651212800 & \scriptstyle 1484846226391449600000  \\[-0.3em]
 5 & \scriptstyle 17273088000000 & \scriptstyle 395432827969536000 & \scriptstyle 38270332174899363840000  \\[-0.3em]
 6 & \scriptstyle 3267661824000  & \scriptstyle 16981697101824000  & \scriptstyle 7179764880140697600000    \\[-0.3em]
 7 & \scriptstyle 2048385024000  & \scriptstyle 1769804660736000   & \scriptstyle 2107042044967157760000     \\[-0.3em]
 8 & \scriptstyle 2048385024000  & \scriptstyle 663676747776000    & \scriptstyle 1746637917726965760000      \\[-0.3em]
\end{array}
$$
 $$
\begin{array}{|c||c|c|c||c||c|c|c|c|c|}\hline
 m & n=11                & n=12                                \\ \hline
 3 & \scriptstyle 750438010707517440000  & \scriptstyle 1353254582337359708160000
 \\[-0.3em] 
 4 & \scriptstyle  24386762038751602237440000  &\scriptstyle  567430482164774579011584000000
 \\[-0.3em] 
 5 & \scriptstyle 6753604569381729842872320000      &  \\[-0.3em]
 6 & \scriptstyle 6531249036579810688696320000 &\\[-0.3em]
 7 & \scriptstyle 28299772584317096755200000 &\\[-0.3em]
 8 & \scriptstyle 21452112008596684800000 &
\end{array}
$$
\caption{The total number of different $K_{3,3}$-free $m\times n$ latin rectangles}
\label{t:3-10}
\end{table}

As we can see, the only small ($\le 11$) order for which 
$K_{3,3}$-free
squares exists is~$8$, excluding the trivial cases of order~$1$ and~$2$.
The two found nonparatopic squares are shown in Fig~\ref{t:8}.
\begin{figure}[ht]
$$
\newcommand\0{\boldsymbol 0}
\newcommand\1{\boldsymbol 1}
\newcommand\2{\boldsymbol 2}
\newcommand\3{\boldsymbol 3}
\newcommand\4{\boldsymbol 4}
\newcommand\5{\boldsymbol 5}
\newcommand\6{\boldsymbol 6}
\newcommand\7{\boldsymbol 7}
\begin{array}{|@{\ }c@{\ }|@{\ }c@{\ }|@{\ }c@{\ }|@{\ }c@{\ }|}
 \hline
  0 & 2 & 4 & 6 \\\hline
  2 & 0 & 6 & 4 \\\hline
  4 & 6 & 0 & 2 \\\hline
  6 & 4 & 2 & 0 \\ \hline
 \end{array}\,,
\ \ 
\begin{array}{|@{\ }c@{\ }|@{\ }c@{\ }|@{\ }c@{\ }|@{\ }c@{\ }|}
 \hline
  1 & 7 & 3 & 5 \\\hline
  3 & 5 & 1 & 7 \\\hline
  5 & 3 & 7 & 1 \\\hline
  7 & 1 & 5 & 3 \\ \hline
 \end{array}
 \ \Longrightarrow
 \ 
  \begin{array}{|@{\ }c@{\ }|@{\ }c@{\ }|@{\ }c@{\ }|@{\ }c@{\ }|}
\hline
         \gry  1 \  0 & \gry  7 \  2 &  4 \ 3 &  6 \ 5 \\
         \gry  0 \  1 & \gry  2 \  7 &  3 \ 4 &  5 \ 6 \\  
\hline
          2 \ 3 &  0 \ 5 & \gry  1 \  6 & \gry  7 \  4 \\ 
          3 \ 2 &  5 \ 0 & \gry  6 \  1 & \gry  4 \  7 \\ 
\hline
          4 \ 5 & \gry  3 \  6 &  0 \ 7 & \gry  1 \  2 \\  
          5 \ 4 & \gry  6 \  3 &  7 \ 0 & \gry  2 \  1 \\
\hline                        
         \gry  7 \  6 &  4 \ 1 & \gry  5 \  2 &  0 \ 3 \\ 
         \gry  6 \  7 &  1 \ 4 & \gry  2 \  5 &  3 \ 0 \\ \hline
   \end{array}\,,
   \ 
 \ 
  \begin{array}{|@{\ }c@{\ }|@{\ }c@{\ }|@{\ }c@{\ }|@{\ }c@{\ }|}
   \hline
   \gry 1 \  0 &   2 \  7 &  4 \  3 &  6 \  5 \\ 
   \gry 0 \  1 &   7 \  2 &  3 \  4 &  5 \  6 \\ 
\hline
    2 \  3 &   0 \  5 &  6 \  1 & \gry 7 \  4 \\
    3 \  2 &   5 \  0 &  1 \  6 & \gry 4 \  7 \\  
\hline
    4 \  5 &  \gry 3 \  6 &  0 \  7 &  2 \  1 \\
    5 \  4 &  \gry 6 \  3 &  7 \  0 &  1 \  2 \\  
\hline
    6 \  7 &   4 \  1 & \gry 5 \  2 &  0 \  3 \\ 
    7 \  6 &   1 \  4 & \gry 2 \  5 &  3 \  0 \\ \hline
   \end{array}
$$
\caption{The two $K_{3,3}$-free $8\times 8$ latin squares and related orthogonal $4\times 4$ latin squares}
\label{t:8}
\end{figure}
One can find that both squares can be obtained by combining two
orthogonal $4\times 4$ latin squares, see details in Section~\ref{s:16}.
As noted in~\cite{BroWan:2011}, each of the two squares is a G-loop isotope; equivalently \cite[Theorem~1]{KP:2016:transitive}, its autotopism group acts transitively
on all~$64$ cells.

For the small orders $n=4$, $5$, $6$, $7$, \ $9$, $10$, $11$ (different from $8$), there are only two $K_{3,3}$-free
$(n-1)\times n$ latin rectangles, with $n=5$ and $n=9$. 
\begin{figure}[ht]
$$
  \begin{array}{|cccc|c|}
  \hline
0 & 3 & 4 & 2  & 1 \\
4 & 0 & 1 & 3  & 2 \\
2 & 4 & 0 & 1  & 3 \\
3 & 1 & 2 & 0  & 4 \\ \hline
                     \hline
\color{grey}1 & \color{grey}2 & \color{grey}3 & \color{grey}4  & \color{grey} 0 \\ \hline
   \end{array}
   \quad
    \begin{array}{|cccccccc|c|}
  \hline
            0 & \color{red}     1 & \color{green!60!black}   4 & \color{green!60!black}   7 & \color{red}     2 & \color{blue}    8 & \color{brown}  3 & \color{blue}    6 &          5 \\
   \color{blue}    7 &          0 & \color{red}     2 & \color{green!60!black}   5 & \color{green!60!black}   8 & \color{red}     3 & \color{blue}    1 & \color{brown}  4 &          6 \\
   \color{brown}  5 & \color{blue}    8 &          0 & \color{red}     3 & \color{green!60!black}   6 & \color{green!60!black}   1 & \color{red}     4 & \color{blue}    2 &          7 \\
   \color{blue}    3 & \color{brown}  6 & \color{blue}    1 &          0 & \color{red}     4 & \color{green!60!black}   7 & \color{green!60!black}   2 & \color{red}     5 &          8 \\
   \color{red}     6 & \color{blue}    4 & \color{brown}  7 & \color{blue}    2 &          0 & \color{red}     5 & \color{green!60!black}   8 & \color{green!60!black}   3 &          1 \\
   \color{green!60!black}   4 & \color{red}     7 & \color{blue}    5 & \color{brown}  8 & \color{blue}    3 &          0 & \color{red}     6 & \color{green!60!black}   1 &          2 \\
   \color{green!60!black}   2 & \color{green!60!black}   5 & \color{red}     8 & \color{blue}    6 & \color{brown}  1 & \color{blue}    4 &          0 & \color{red}     7 &          3 \\
   \color{red}     8 & \color{green!60!black}   3 & \color{green!60!black}   6 & \color{red}     1 & \color{blue}    7 & \color{brown}  2 & \color{blue}    5 &          0 &          4 \\ \hline  \hline
           \color{grey} 1 &         \color{grey} 2 &         \color{grey} 3 &         \color{grey} 4 &         \color{grey} 5 &         \color{grey} 6 &         \color{grey} 7 &        \color{grey}  8 &         \color{grey} 0 \\   \hline
   \end{array}
 $$
\caption{The $K_{3,3}$-free latin rectangles $4\times 5$ and $8\times 9$, with their continuations to latin squares.}
\label{t:9}
\end{figure}
\begin{figure}[ht]
\small
$$
\def\0{\mathrm{A}}
\def\1{\mathrm{B}}
  \begin{array}{|@{\,}*{12}{c@{\,}}|}
  \hline
 0 & 1 & 2 &  3 & 4 & 5 &  6 & 7 & 8 &  9 &\0 &\1   \\
 1 & 2 & 0 &  4 & 5 & 3 &  7 & 8 & 6 & \0 &\1 & 9   \\
 2 & 6 & 3 &  5 & 9 & 0 &  8 &\0 & 1 & \1 & 7 & 4   \\
 3 & 5 & 4 &  0 & 2 & 1 & \0 & 9 &\1 &  7 & 6 & 8   \\
 4 & 3 & 5 &  1 & 0 & 2 &  9 &\1 &\0 &  6 & 8 & 7   \\
 5 & 0 & 8 & \0 & 1 & 4 & \1 & 2 & 7 &  3 & 9 & 6   \\
 6 & 8 & 7 & \1 &\0 & 9 &  2 & 1 & 0 &  4 & 3 & 5   \\
 7 & 4 & 1 &  2 & 3 &\1 &  0 & 6 & 9 &  8 & 5 &\0   \\
 8 & 7 & 6 &  9 &\1 &\0 &  1 & 0 & 2 &  5 & 4 & 3   \\
 9 &\1 &\0 &  8 & 7 & 6 &  5 & 4 & 3 &  1 & 0 & 2   \\
\0 & 9 &\1 &  7 & 6 & 8 &  4 & 3 & 5 &  2 & 1 & 0   \\ \hline
   \end{array}
   \qquad
  \begin{array}{|@{\,}*{12}{c@{\,}}|}
  \hline
 0 & 1 & 2 & 3 & 4 & 5 & 6 & 7 & 8 & 9 &\0 &\1 \\
 1 & 2 & 3 & 4 & 5 & 6 & 7 & 8 & 0 &\0 &\1 & 9 \\
 2 & 8 & 7 & 5 &\0 & 9 & 0 & 4 & 3 &\1 & 6 & 1 \\
 3 & 0 & 8 & 1 & 9 & 4 &\1 &\0 & 7 & 6 & 5 & 2 \\
 4 & 3 &\1 & 8 & 2 & 1 &\0 & 9 & 6 & 5 & 0 & 7 \\
 5 & 4 & 0 & 9 & 6 & 3 & 2 & 1 &\1 & 8 & 7 &\0 \\
 6 &\1 &\0 & 0 & 8 & 7 & 4 & 3 & 2 & 1 & 9 & 5 \\
 7 &\0 & 9 & 2 &\1 & 0 & 8 & 5 & 1 & 4 & 3 & 6 \\
 8 & 9 & 4 &\1 & 7 &\0 & 3 & 6 & 5 & 2 & 1 & 0 \\ \hline
   \end{array}
   \qquad
  \begin{array}{|@{\,}*{12}{c@{\,}}|}
  \hline
 0 & 1 & 2 & 3 &  4 & 5 &  6 & 7 &  8 & 9 &\0 &\1  \\
 1 & 2 & 3 & 0 &  5 & 4 &  7 & 6 &  9 &\0 &\1 & 8  \\
 2 & 9 & 1 & 8 & \0 & 3 & \1 & 4 &  6 & 0 & 7 & 5  \\
 3 & 7 & 0 &\1 &  6 & 2 &  8 & 5 & \0 & 1 & 9 & 4  \\
 4 & 6 & 5 &\0 &  7 & 1 &  9 & 0 & \1 & 2 & 8 & 3  \\
 5 & 8 & 4 & 9 & \1 & 0 & \0 & 1 &  7 & 3 & 6 & 2  \\
 6 & 0 & 7 & 1 &  3 & 8 &  2 & 9 &  4 &\1 & 5 &\0  \\
 7 & 5 & 6 & 4 &  2 &\1 &  3 &\0 &  1 & 8 & 0 & 9  \\
 8 &\1 &\0 & 7 &  9 & 6 &  5 & 3 &  0 & 4 & 2 & 1  \\
 9 & 4 & 8 & 5 &  1 &\0 &  0 &\1 &  2 & 6 & 3 & 7  \\
\0 & 3 &\1 & 2 &  0 & 9 &  1 & 8 &  5 & 7 & 4 & 6  \\ \hline
   \end{array}
 $$
\caption{Two $K_{3,3}$-free latin $11\times 12$ rectangles and one $K_{3,3}$-free latin $9\times 12$ rectangles without intercalates.}
\label{t:12}
\end{figure}
Both rectangles (see Fig.~\ref{t:9}) have some nice symmetries (isotopisms, paratopisms) and hypothetically have a potential to generalize to higher orders. The last row shown in Fig.~\ref{t:9}, for each case $n=5$ and $n=9$, does not
belong to the rectangle; it shows the unique continuation to a latin square.
This square is not $K_{3,3}$-free, but all $K_{3,3}$-patterns necessarily
involve elements from the last row, the last column, and the main diagonal.
Additionally, with a nonexhaustive search, we found two $K_{3,3}$-free latin $11\times 12$ rectangles, see Fig.~\ref{t:12};
each of them has only $2$ autotopisms and $4$ paratopisms (actually the search approach was similar, but at each step we kept only rectangles with a nontrivial paratopisms group).

We also made a search for $K_{3,3}$-free latin rectangles that avoid intercalates
(an intercalate is a latin $2\times 2$ subsquare, which is one of the most famous substructures
in latin squares; for latin squares of order larger than~$4$, intercalates are equivalent to size-$4$ maximal cliques in the latin square graph). The results of their classification are shown in Table~\ref{t:inter}; the largest such rectangle we found is shown in Fig.~\ref{t:12}.

\begin{table}
 $$
\begin{array}{|c|*{10}{@{\ }c@{\ }|}}\hline
 m
   & n{:}\ 3 & 4 & 5 & 6 & 7 & 8 & 9 & 10 & 11 & 12 \\ \hline
3 & 0 & 0 & 1 & 1 & 4 & 17 & 121 & 963 & 9982 & 118923 \\[-0.9ex]
 &\scriptstyle( 0 )&\scriptstyle( 0 )&\scriptstyle( 1 )&\scriptstyle( 1 )&\scriptstyle( 6 )&\scriptstyle( 23 )&\scriptstyle( 199 )&\scriptstyle( 1766 )&\scriptstyle( 19278 )&\scriptstyle( 234764 )\\
4 & & 0 & 1 & 1 & 2 & 58 & 2577 & 194440 & 23133482 & 3424130183 \\[-0.9ex]
 & &\scriptstyle( 0 )&\scriptstyle( 1 )&\scriptstyle( 1 )&\scriptstyle( 2 )&\scriptstyle( 90 )&\scriptstyle( 4875 )&\scriptstyle( 386112 )&\scriptstyle( 46232421 )&\scriptstyle( 6847754337 )\\
5 & & & 0 & 0 & 0 & 11 & 1429 & 406986 & 377075287 & 574834480306 \\[-0.9ex]
 & & &\scriptstyle( 0 )&\scriptstyle( 0 )&\scriptstyle( 0 )&\scriptstyle( 16 )&\scriptstyle( 2713 )&\scriptstyle( 810190 )&\scriptstyle( 754037031 )&\scriptstyle( 1149663169917 )\\
6 & & & & 0 & 0 & 0 & 23 & 9632 & 31162840 & 297568453985 \\[-0.9ex]
 & & & &\scriptstyle( 0 )&\scriptstyle( 0 )&\scriptstyle( 0 )&\scriptstyle( 42 )&\scriptstyle( 19004 )&\scriptstyle( 62313657 )&\scriptstyle( 595135150850 )\\
7 & & & & & 0 & 0 & 3 & 33 & 52506 & 761716109 \\[-0.9ex]
 & & & & &\scriptstyle( 0 )&\scriptstyle( 0 )&\scriptstyle( 5 )&\scriptstyle( 59 )&\scriptstyle( 104658 )&\scriptstyle( 1523339709 )\\
8 & & & & & & 0 & 0 & 0 & 11 & 51871 \\[-0.9ex]
 & & & & & &\scriptstyle( 0 )&\scriptstyle( 0 )&\scriptstyle( 0 )&\scriptstyle( 19 )&\scriptstyle( 103458 )\\
9 & & & & & & & 0 & 0 & 0 & 1 \\[-0.9ex]
 & & & & & & &\scriptstyle( 0 )&\scriptstyle( 0 )&\scriptstyle( 0 )&\scriptstyle( 2 )\\\hline
\end{array}
$$
\caption{The number of main classes (isotopy classes) of $K_{3,3}$-free $m\times n$ latin rectangles without intercalates
($2$-by-$2$ latin subsquares)}
\label{t:inter}
\end{table}

\section
[A K(3,3)-free latin square of order 16]
{A $K_{3,3}$-free latin square of order $16$}
\label{s:16}
We were able to construct a $K_{3,3}$-free 
$16 \times 16$
in the following way, using the idea to utilize 
the observed structure of the found $8 \times 8$ latin squares.

\begin{figure}[ht]
$$
\newcommand\0{\boldsymbol 0}
\newcommand\1{\boldsymbol 1}
\newcommand\2{\boldsymbol 2}
\newcommand\3{\boldsymbol 3}
\newcommand\4{\boldsymbol 4}
\newcommand\5{\boldsymbol 5}
\newcommand\6{\boldsymbol 6}
\newcommand\7{\boldsymbol 7}
\begin{array}{|@{\ }c@{\ }|@{\ }c@{\ }|@{\ }c@{\ }|@{\ }c@{\ }|}
 \hline
  0 & 2 & 4 & 6 \\\hline
  2 & 0 & 6 & 4 \\\hline
  4 & 6 & 0 & 2 \\\hline
  6 & 4 & 2 & 0 \\ \hline
 \end{array}\,,
\ \ 
\begin{array}{|@{\ }c@{\ }|@{\ }c@{\ }|@{\ }c@{\ }|@{\ }c@{\ }|}
 \hline
  1 & 7 & 3 & 5 \\\hline
  3 & 5 & 1 & 7 \\\hline
  5 & 3 & 7 & 1 \\\hline
  7 & 1 & 5 & 3 \\ \hline
 \end{array}
 \ \Longrightarrow
 \ 
 \begin{array}{|@{\ }c@{\ }c@{\ }|@{\ }c@{\ }c@{\ }|@{\ }c@{\ }c@{\ }|@{\ }c@{\ }c@{\ }|}
\hline
          \0 & 1 &  2 & 7 &  \4 & 3 &  6 & 5 \\
          1 & 0 &  7 & 2 &  3 & 4 &  5 & 6 \\  
\hline
          2 & 3 &  \0 & 5 &  \6 & 1 &  4 & 7 \\ 
          3 & 2 &  5 & 0 &  1 & 6 &  7 & 4 \\ 
\hline
          4 & 5 &  6 & 3 &  0 & 7 &  2 & 1 \\  
          5 & 4 &  3 & 6 &  7 & 0 &  1 & 2 \\
\hline                        
          \6 & 7 &  \4 & 1 &  2 & 5 &  0 & 3 \\ 
          7 & 6 &  1 & 4 &  5 & 2 &  3 & 0 \\ \hline
   \end{array}\qquad 
     \begin{array}{|@{\ }c@{\ }|@{\ }c@{\ }c@{\ }|@{\ }c@{\ }c@{\ }|@{\ }c@{\ }c@{\ }|}
\hline
          \gry 1 \ \gry\0 &  2 & 7 & \4 & 3 &  6 & 5 \\
          \gry\0 \ \gry 1 &  7 & 2 &  3 &\4 &  5 & 6 \\  
\hline
          2 \ 3 & \0 & 5 & \6 & 1 &  4 & 7 \\ 
          3 \ 2 &  5 &\0 &  1 &\6 &  7 & 4 \\ 
\hline
          4 \ 5 &  6 & 3 &  0 & 7 &  2 & 1 \\  
          5 \ 4 &  3 & 6 &  7 & 0 &  1 & 2 \\
\hline                        
         \6 \ 7 & \4 & 1 &  2 & 5 &  0 & 3 \\ 
          7\ \6 &  1 &\4 &  5 & 2 &  3 & 0 \\ \hline
   \end{array}
$$
\caption{The two $K_{3,3}$-free $8\times 8$ latin squares and related orthogonal $4\times 4$ latin squares}
\label{t:ort4}
\end{figure}

Assume we have two $n\times n$ latin squares
$A_0$ and~$A_1$
and a $\{0,1\}$-valued array~$S$ of size $n\times n$, called a
\emph{switching matrix}.
We assume that $A_0$ and~$A_1$ have the same sets of rows and columns $\{0,1,\ldots,n-1\}$ but disjoint sets of letters,
say,
$\{0,2,\ldots,2n-2\}$ and $\{1,3,\ldots,2n-1\}$.
We define a $2n\times 2n$ latin square~$L$ called a 
\emph{switched combination} of~$A_0$
and~$A_1$ (if $S$ is all-zero, just a \emph{$0$-combination}) 
in the following way: 
\def\div#1#2{{#1/\!\!/#2}}
\begin{equation}\label{eq:Lij}
 L(r,c)=
A_{r\oplus c\oplus S(\div{r}{2},\div{c}{2})}(\div{r}{2},\div{c}{2}),
\end{equation}
where $\oplus$ is the addition modulo~$2$ and $\div{}{}$ is the integer division.
The construction is illustrated in Fig.~\ref{t:ort4}
where the first square of order~$8$ in the combination of the two latin squares of order~$4$ with  the all-zero switching matrix, while the second
square of order~$8$ in the combination with the switching matrix that has only one~$1$ 
in the left top position.
For the combined latin square, we refer to the set of four cells of the form $(2i,2j)$, $(2i,2j+1)$, $(2i+1,2j)$, $(2i+1,2j+1)$ as a \emph{block}.
 
\begin{lemma}
 For a switched combination of two orthogonal latin squares, 
 a forbidden pattern cannot be located in fewer than~$6$ blocks.
 Inversely, a switched combination of two nonorthogonal latin squares
 always contains a forbidden pattern located in $3$ blocks.
\end{lemma}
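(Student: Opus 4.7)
The plan is to analyse the distribution of the six cells of a putative forbidden pattern among the $n\times n$ grid of $2\times 2$ blocks, using the crucial fact that in block $(I,J)$ of $L$ every cell carries one of exactly two letters, namely $A_0(I,J)$ or $A_1(I,J)$ (distributed between the two diagonals according to the block switching). Writing $R_i=\lfloor r_i/2\rfloor$ and $C_j=\lfloor c_j/2\rfloor$, cell $(r_i,c_j)$ lies in block $(R_i,C_j)$; since three distinct $r_i$ cannot fit into the two rows of one block, each of the triples $(R_1,R_2,R_3)$ and $(C_1,C_2,C_3)$ is either all-distinct or has exactly one coincident pair.

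For the first statement I would enumerate the resulting coincidence patterns. If all $R_i$ and $C_j$ are pairwise distinct, the pattern already occupies $6$ blocks. If instead (say) $R_1=R_2$ while the $C_j$ are distinct, then the two cells carrying $l_3$ --- namely $(r_1,c_2)$ and $(r_2,c_1)$ --- land in the two different blocks $(R_1,C_2)$ and $(R_1,C_1)$ sharing row-block $R_1$; the two-letter rule makes $l_3$ equal to $A_{\epsilon}(R_1,C_1)$ and to $A_{\delta}(R_1,C_2)$ for some $\epsilon,\delta\in\{0,1\}$, and disjointness of the alphabets of $A_0$ and $A_1$ forces $\epsilon=\delta$, placing the letter $l_3$ twice in row $R_1$ of the latin square $A_\epsilon$ --- a contradiction. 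An essentially identical argument, or the dual observation that some block is forced to hold three of the pattern cells with three distinct letters (exceeding the two allowed per block), disposes of every case in which a row coincidence $R_i=R_j$ is combined with a column coincidence $C_{i'}=C_{j'}$ on a different pair $\{i',j'\}\ne\{i,j\}$. The sole surviving configuration with fewer than $6$ blocks is $R_1=R_2$ and $C_1=C_2$ (up to relabelling), yielding an L-shape of three blocks $(R_1,C_1)$, $(R_1,C_3)$, $(R_3,C_1)$. The latter two must each realise the letter pair $\{l_1,l_2\}$, so $\{A_0(R_1,C_3),A_1(R_1,C_3)\}=\{l_1,l_2\}=\{A_0(R_3,C_1),A_1(R_3,C_1)\}$, and alphabet-disjointness refines this to $A_0(R_1,C_3)=A_0(R_3,C_1)$ and $A_1(R_1,C_3)=A_1(R_3,C_1)$ --- in direct contradiction with orthogonality.

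For the converse I would start from two grid cells $(P,Q)\ne(P',Q')$ witnessing non-orthogonality (necessarily $P\ne P'$ and $Q\ne Q'$) and build the three-block L at $(P,Q)$, $(P,Q')$, $(P',Q')$, identifying $(R_1,C_3)\!=\!(P,Q)$, $(R_3,C_1)\!=\!(P',Q')$, and $(R_1,C_1)\!=\!(P,Q')$. Set $l_1=A_0(P,Q)=A_0(P',Q')$, $l_2=A_1(P,Q)=A_1(P',Q')$, and $l_3=A_0(P,Q')$ (note $l_3\notin\{l_1,l_2\}$ by latinity and alphabet-disjointness). The six equations $L(r_i,c_j)=l_k$ become linear relations over $\mathrm{GF}(2)$ in the inner parities $\alpha_i=r_i-2R_i$, $\beta_j=c_j-2C_j$ and in the block-switchings $s(P,Q)$, $s(P,Q')$, $s(P',Q')$. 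Three of the six equations are dependencies, expressing $\alpha_1+\alpha_2\equiv 1$, $\beta_1+\beta_2\equiv 1$ and one automatic parity identity, leaving three independent constraints that determine $\beta_2,\beta_3,\alpha_3$ (and hence $\beta_1,\alpha_2$) from a freely chosen $\alpha_1$.

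The main obstacle is bookkeeping rather than a single inspired step: in the forward direction one must go through every coincidence pattern of the $R_i$ and $C_j$ systematically and check that each $4$- or $5$-block configuration falls into one of the two contradictions above; in the converse one must verify that the three residual $\mathrm{GF}(2)$ constraints are simultaneously solvable for every switching matrix and both orientations $\{l_1,l_2\}=\{A_0(P,Q),A_1(P,Q)\}$. Once the two-letter observation and the disjoint-alphabet observation are isolated, both reduce to routine parity checks.
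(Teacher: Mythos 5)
Your proof is correct and follows essentially the same route as the paper: the key facts (each $2\times2$ block of a switched combination carries exactly the two letters $A_0(I,J)$, $A_1(I,J)$, one per diagonal; disjoint alphabets; reduction of any sub-$6$-block configuration to the three-block L-shape whose two non-corner blocks carry the same letter pair, contradicting orthogonality; and the explicit L-shape construction for the converse) are exactly those the paper uses. Your version merely carries out in full the case analysis and the GF$(2)$ parity check that the paper dismisses with ``it is easy to see'' and ``simple to see'', and correctly identifies the final contradiction as being with orthogonality.
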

\begin{proof}
 If a forbidden pattern is located in fewer than~$6$ blocks,
 then one block contains two elements of it. Those two elements 
 can be two cells with the same letter or two cells with different letters but in the same row or column.
 In both cases, it is easy to see that the pattern is actually located in three blocks
 $$
      \begin{array}{|@{\ }c@{\ }c@{\ }|@{\ }c@{\ }c@{\ }|}
\hline
          \cdot &  A &  B & \cdot  \\
          A &  \cdot &  C & \cdot  \\  
\hline
          \cdot & \cdot &  \cdot & \cdot  \\ 
          C & B &  \cdot & \cdot  \\ 
\hline
   \end{array}
$$
and there are two blocks that contain the same pair of letters. 
This contradicts the definition of latin squares and proves the first claim. 

The second claim is simple to see, as a switched combination of two nonorthogonal
latin squares contains two blocks with the same pair of letters and hence contains
a forbidden pattern as above.
\end{proof}

So, if we want to get a $K_{3,3}$-free switched combination, then the combined latin squares
must be orthogonal and in this case it suffices to avoid forbidden pattern with 
$6$ elements in $6$ distinct blocks.
In the first $8 \times 8$ latin square in Fig.~\ref{t:ort4}, 
, a pattern is indicated by six bold letters.
In the second $8 \times 8$ latin square, we switched one of the six blocks
containing that pattern and see that those six blocks do not contain a forbidden pattern anymore.
\begin{lemma} {\rm(i)}
 If a switched combination of two latin squares contains a forbidden pattern in $6$ blocks,
 then the switching matrix~$S$ has even number of ones in the positions corresponding
 to those $6$ blocks.
  {\rm(ii)} Moreover, any other switching matrix with even number of ones in those six position
 gives (by combining the same two latin squares) a latin square
 with a forbidden pattern in the corresponding six blocks.
\end{lemma}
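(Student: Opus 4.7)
My plan is to encode everything in a $6 \times 6$ linear system over $\mathbb{F}_2$ and read both assertions from its structure. Label the six blocks by their block-coordinates $(R_i, C_j)$ for $(i,j) \in \{(1,2),(2,1),(2,3),(3,2),(3,1),(1,3)\}$; the six-block assumption forces $R_1, R_2, R_3$ to be pairwise distinct, and similarly for $C_1, C_2, C_3$. The pattern cell inside block $B_{ij}=(R_i,C_j)$ sits at offset $(a_i, b_j) \in \{0,1\}^2$, introducing only six unknowns $a_1, a_2, a_3, b_1, b_2, b_3$. Since $A_0$ and $A_1$ have disjoint alphabets, each pattern letter $l_m$ determines a unique index $k_m \in \{0,1\}$ with $l_m \in A_{k_m}$; applying the combination rule at each of the six cells then translates into
\[
a_i + b_j \equiv k_m + S(B_{ij}) \pmod{2},
\]
where $k_m$ is the index attached to the letter living at $B_{ij}$.

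The first claim is immediate by summing all six equations: the left-hand side collapses to $2(a_1+a_2+a_3)+2(b_1+b_2+b_3) \equiv 0$, each pair of $k_m$ terms contributes $2k_m \equiv 0$, and what remains is $\sum S(B_{ij}) \equiv 0 \pmod{2}$, as required.

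For the converse, the key step is to verify that the coefficient matrix has rank exactly $5$ over $\mathbb{F}_2$; I expect this to be the main (but still routine) computation. The six row-vectors are $e_1{+}e_5$, $e_2{+}e_4$, $e_2{+}e_6$, $e_3{+}e_5$, $e_3{+}e_4$, $e_1{+}e_6$, whose sum is zero, and any five of them are easily seen to be linearly independent by a short elimination. Given rank $5$, the unique consistency condition for the system is exactly $\sum S'(B_{ij}) \equiv 0 \pmod{2}$, so any switching matrix $S'$ with this parity admits a solution $(a_i', b_j')$. Crucially, the indices $k_m$ can be reused from the original pattern because the equalities $A_{k_m}(R_i, C_j) = A_{k_m}(R_{i'}, C_{j'})$ they encode are properties of $A_0$ and $A_1$ alone and are unaffected by changing $S$. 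Finally, distinctness of the resulting rows and columns is automatic, since the pairs $\{2R_i, 2R_i+1\}$ are pairwise disjoint across distinct $R_i$ and similarly for columns, so the six cells at $(2R_i + a_i', 2C_j + b_j')$ form a genuine $K_{3,3}$ pattern located in the prescribed six blocks.
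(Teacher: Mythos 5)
The paper states this lemma without any proof, so there is no argument of the authors' to compare yours against; your write-up supplies the missing justification, and it is correct. The reduction to a linear system over $\mathrm{GF}(2)$ is exactly the right mechanism: a pattern spread over six distinct blocks forces the three block-rows and three block-columns to be distinct, so the six cells are governed by row-offsets $a_1,a_2,a_3$ and column-offsets $b_1,b_2,b_3$ only; each cell contributes one equation $a_i+b_j=k_m+S(R_i,C_j)+\mathrm{const}$, and the coefficient matrix is the $\mathrm{GF}(2)$ incidence matrix of a $6$-cycle on the six offset variables, whose unique row dependency is the all-ones vector. Summing the six equations (each $a_i$, $b_j$ and $k_m$ occurring twice) gives the parity condition, which proves the first claim; and since the rank is $5$, that same parity of $S'$ over the six blocks is the unique consistency condition for solvability, which, combined with your observation that the equalities $A_{k_m}(R_i,C_j)=A_{k_m}(R_{i'},C_{j'})$ depend only on $A_0,A_1$, proves the second. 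Two cosmetic points, neither a gap: depending on how one reads the subscript in the paper's definition of $L$, each equation carries an extra constant $1$, but six such constants cancel in both the parity sum and the consistency condition; and for the reconstructed configuration to be an induced $K_{3,3}$ one should also note that the three letters are pairwise distinct --- they are the same $l_1,l_2,l_3$ as in the original pattern, so this is immediate but deserves a sentence.
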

\begin{proof}
By Proposition~\ref{p:six}, we can assume that the six cells in a forbidden pattern are
 $x=(r_1,c_2,l_3)$,  $x'=(r_2,c_3,l_1)$,  $x''=(r_3,c_1,l_2)$,
$y=(r_2,c_1,l_3)$, $y'=(r_3,c_2,l_1)$, $y''=(r_1,c_3,l_2)$.

 (i)  Consider the cells $x$ and $y$, which have the same letter~$l_3$.
 From~\eqref{eq:Lij}, we see that
 $ r_1\oplus c_2\oplus S(\div{r_1}{2},\div{c_2}{2})$ equals $ r_2\oplus c_1\oplus S(\div{r_2}{2},\div{c_1}{2} )$ (which is $0$ if $l_3$ is even and $1$ if $l_3$ is odd). So, we have
 \begin{equation}\label{eq:xy}
  r_1\oplus c_2 \oplus r_1\oplus c_2 = S(\div{r_1}{2},\div{c_2}{2})\oplus S(\div{r_2}{2},\div{c_1}{2}).
 \end{equation}
 Considering similarly $x'$ and $y'$ (with letter~$l_1$), then $x''$ and $y''$ (with letter~$l_2$), we get
 \begin{equation}\label{eq:xy'}
  r_2\oplus c_3 \oplus r_3\oplus c_2
  =
  S(\div{r_2}{2},\div{c_3}{2}) \oplus S(\div{r_3}{2},\div{c_2}{2})
 \end{equation}
 and
 \begin{equation}\label{eq:xy''}
  r_3\oplus c_1 \oplus r_1\oplus c_3
  =
  S(\div{r_3}{2},\div{c_1}{2}) \oplus S(\div{r_1}{2},\div{c_3}{2}).
 \end{equation}
 Summing the three equations above, we get zero in the left side (indeed, each value is included twice) and the sum of
 the six values of~$S$ in the right side, which proves that the number of nonzero values among this six is even.

 (ii) Now, assume we have another switching matrix $\tilde S$
 that switch an even number of blocks among the six blocks
 containing $x$, $x'$, $x''$, $y$, $y'$, $y''$.
 In the new combined latin square $\tilde L$,
 constructed as the $\tilde S$-switched combination of the same two latin squares of order~$n$,
 we consider the following six elements, expecting that they will form a forbidden pattern in the same six cells and with the same letters as $x$, $x'$, $x''$, $y$, $y'$, $y''$:
 \begin{itemize}
  \item
  $\tilde x=(\tilde r_1,\tilde c_2,l_3)$ is any
 of the two elements of $\tilde L$
 in the block containing~$x$ (in~$L$)
 that have the letter~$l_3$ (the letter of~$x$).
 \item
 $\tilde y'=(\tilde r_3,\tilde c_2,l_1)$ is the element of $\tilde L$ lying in the same block as~$y'$
 and in the same column $\tilde c_2$ as~$\tilde x$ and having
 the letter~$l_1$ (the letter of~$y'$).
 \item
 $\tilde x''=(\tilde r_3,\tilde c_1,l_2)$
 is the element of $\tilde L$ lying in the same block as~$x''$
 and the same row $\tilde r_3$ as~$\tilde y'$ and having
 the letter~$l_2$ (the letter of~$x'$).
 \item
 $\tilde y=(\tilde r_2,\tilde c_1,l_3)$ is the element of $\tilde L$ lying in the same block as~$y$
 and the same column $\tilde c_1$ as~$\tilde x''$ and having
 the letter~$l_3$ (the letter of~$y$).
 \item
 $\tilde x'=(\tilde r_2,\tilde c_3,l_1)$
 is the element of $\tilde L$ lying in the same block as~$x'$
 and the same row $\tilde r_3$ as~$\tilde y$ and having
 the letter~$l_1$ (the letter of~$x'$).
 \item Finally,
 $\tilde y''=(\tilde r_1,\tilde c_3,\tilde l_2)$
 is the element of~$\tilde L$
 in the same column~$\tilde c_3$ as~$\tilde x'$ and
 in the same the row $\tilde r_1$ as~$\tilde x$.

 \end{itemize}
To see that these six elements of $\tilde L$
form a forbidden pattern, it remains to ensure
that $\tilde l_2 = l_2$. Assuming the contrary, we have that
$\tilde x''$ and $\tilde y''$ have different letters,
while $\tilde x$ and $\tilde y$ have equal letters,
and $\tilde x'$ and $\tilde y'$ have equal letters.
Similarly to~\eqref{eq:xy}--\eqref{eq:xy''}, from~\eqref{eq:Lij}
we derive
 \begin{eqnarray*}\label{eq:txy''}
  \tilde r_3\oplus \tilde c_1 \oplus \tilde r_1\oplus \tilde  c_3
  &\!\!\not =\!\!&
  \tilde S(\div{\tilde r_3}{2},\div{\tilde c_1}{2}) \oplus \tilde S(\div{\tilde r_1}{2},\div{\tilde c_3}{2}),
  \\
    \tilde r_1\oplus \tilde c_2 \oplus \tilde r_1\oplus \tilde c_2
    &\!\!=\!\!&
    \tilde S(\div{\tilde r_1}{2},\div{\tilde c_2}{2})\oplus \tilde S(\div{\tilde r_2}{2},\div{\tilde c_1}{2}),
    \\
  \tilde r_2\oplus \tilde c_3 \oplus \tilde r_3\oplus \tilde c_2
  &\!\!=\!\!&
  \tilde S(\div{\tilde r_2}{2},\div{\tilde c_3}{2}) \oplus \tilde S(\div{\tilde r_3}{2},\div{\tilde c_2}{2}).
 \end{eqnarray*}
 Summing these three (in)equations with~$\oplus$,
 we get $0$ in the left (each value repeats twice), $0$ in the right (even number of switches), and $\not=$ in between,
 a~contradiction. Hence, $\tilde l_2 = l_2$ and the six elements $\tilde x$, $\tilde x'$, $\tilde x''$, $\tilde y$, $\tilde y'$, $\tilde y''$ form a forbidden pattern.
\end{proof}

Now, the strategy is as follows. We take a pair of orthogonal latin squares $A_0$, $A_1$
of order~$n$
and consider the elements $s_{i,j}$, $i,j=0,\ldots,n-1$ 
of an $n\times n$ switching matrix as variables.
We construct the $0$-combination $L_0$ of~$A_0$ and~$A_1$ and locate all forbidden patterns.
For a forbidden pattern in blocks with coordinates 
$(i  ,j'')$,
$(i' ,j'')$,
$(i  ,j  )$,
$(i' ,j' )$,
$(i'',j  )$,
$(i'',j' )$,
we consider the equation
$$ 
s_{i  ,j''} +
s_{i' ,j''} +
s_{i  ,j  } +
s_{i' ,j' } +
s_{i'',j  } +
s_{i'',j' }  = 1
$$
over GF$(2)$. All such equations, for all forbidden patterns in~$L_0$,
form a system of linear equations. If the system has a solution,
then a $K_{3,3}$-free latin square is constructed as the combination 
             of~$A_0$ and~$A_1$ with switching matrix from the solution.
             
There are $5951$ pairs of orthogonal pairs of $8\times 8$
latin squares, up to the following obvious equivalence:
for our approach, the pairs of orthogonal latin squares are essentially the same
if one pair can be obtained from the other by isotopy,
swapping the roles of rows and columns (but not the role of letters), and/or swapping the two orthogonal squares
(however, for example, changing the roles of columns and letters of the first square
can result in the essentially different combination).
  Representatives of the $5951$ equivalence classes
  with respect to this equivalence
  can be found from representatives of the 
  $2165$ main classes of pairs of orthogonal pairs of $8\times 8$
latin squares~\cite{MK:data}.
Finally, we have found that one (and only one of~$5951$!) pair of orthogonal latin squares
yields a solvable system, see Fig.~\ref{t:888}. There are $2^{15}$ solutions
(Fig.~\ref{t:888} shows only one of them, the other can be obtained
by changing all the values in some columns and rows), they result
in isotopic $K_{3,3}$-free latin squares of order~$16$; one of them is shown in 
Fig.~\ref{t:16}.
The found latin square has only $32$ autotopisms (the full paratopism group has order~$64$),
as well as the corresponding $0$-combination. In particular, the paratopism group
does not act transitively on cells (equivalently, the square is not isotopic to a G-loop),
in contrast to the $K_{3,3}$-free latin squares of order~$8$.
Each of the combined $8\times 8$
orthogonal latin squares is very symmetric, 
the autotopism group acts transitively on the $64$ cells
(that is, we have a G-loop isotope) and has order
$64$ and $192$, respectively. However, the autotopism group
of the pair of orthogonal latin squares has order~$16$ only.
The first of the combined $8\times 8$ squares (Fig.~\ref{t:888})
is, in its turn, a switched combination of the $4\times 4$
squares from Fig.~\ref{t:8}; it is notable that 
(as can be observed from the bold elements in Fig.~\ref{t:8} and Fig.~\ref{t:888})
the switching matrix is the mod-2 sum of 
the switching matrices for the two $K_{3,3}$-free
squares in Fig.~\ref{t:8}. The structure of the second
square in Fig.~\ref{t:888} is less understood.

\begin{problem}
 What is so special in those two orthogonal latin squares (Fig.~\ref{t:888})?
 Can the construction of $K_{3,3}$-free (universally noncommutative) latin squares
 be generalized to higher orders?
\end{problem}

\begin{figure}[ht]
$$
\newcommand\0{\boldsymbol 0}
\newcommand\1{\boldsymbol 1}
\newcommand\2{\boldsymbol 2}
\newcommand\3{\boldsymbol 3}
\newcommand\4{\boldsymbol 4}
\newcommand\5{\boldsymbol 5}
\newcommand\6{\boldsymbol 6}
\newcommand\7{\boldsymbol 7}
\begin{array}{|@{\ }c@{\ }c@{\ }|@{\ }c@{\ }c@{\ }|@{\ }c@{\ }c@{\ }|@{\ }c@{\ }c@{\ }|}
\hline
  0 & 1 &\3 &\2 & 4 & 5 & 6 & 7 \\  
  1 & 0 &\2 &\3 & 5 & 4 & 7 & 6 \\ \hline
  2 & 5 & 0 & 7 &\1 &\6 & 4 & 3 \\  
  5 & 2 & 7 & 0 &\6 &\1 & 3 & 4 \\ \hline
  4 & 7 & 6 & 5 & 0 & 3 &\1 &\2 \\ 
  7 & 4 & 5 & 6 & 3 & 0 &\2 &\1 \\ \hline
 \3 &\6 & 4 & 1 & 2 & 7 & 0 & 5 \\  
 \6 &\3 & 1 & 4 & 7 & 2 & 5 & 0 \\   \hline
 \end{array}
\qquad
\begin{array}{|@{\ }c@{\ \ }c@{\ \ }c@{\ \ }c@{\ \ }c@{\ \ }c@{\ \ }c@{\ \ }c@{\ }|} 
\hline
  0 & 2 & 5 & 6 & 7 & 4 & 1 & 3 \\ 
  1 & 3 & 4 & 7 & 6 & 5 & 0 & 2 \\ 
  2 & 0 & 7 & 4 & 5 & 6 & 3 & 1 \\ 
  3 & 1 & 6 & 5 & 4 & 7 & 2 & 0 \\              
  4 & 7 & 3 & 1 & 2 & 0 & 6 & 5 \\      
  5 & 6 & 2 & 0 & 3 & 1 & 7 & 4 \\       
  6 & 5 & 1 & 3 & 0 & 2 & 4 & 7 \\        
  7 & 4 & 0 & 2 & 1 & 3 & 5 & 6 \\   \hline
 \end{array}
\qquad
\begin{array}{|@{\ }c@{\ \ }c@{\ \ }c@{\ \ }c@{\ \ }c@{\ \ }c@{\ \ }c@{\ \ }c@{\ }|} 
\hline
  1 & 0 & 0 & 0 & 1 & 0 & 0 & 0 \\ 
  0 & 1 & 0 & 0 & 0 & 1 & 0 & 0 \\ 
  0 & 0 & 0 & 1 & 0 & 0 & 0 & 1 \\ 
  0 & 0 & 1 & 0 & 0 & 0 & 1 & 0 \\ 
  0 & 1 & 0 & 0 & 0 & 1 & 0 & 0 \\ 
  1 & 0 & 0 & 0 & 1 & 0 & 0 & 0 \\ 
  0 & 0 & 1 & 0 & 0 & 0 & 1 & 0 \\ 
  0 & 0 & 0 & 1 & 0 & 0 & 0 & 1 \\ \hline
 \end{array}
$$
\caption{Two orthogonal latin squares and a switching function that give a $K_{3,3}$-free latin square of order $16$ as a switched combitation}
\label{t:888}
\end{figure}

\begin{figure}[ht]
$$
\def\0{\mathrm{A}}
\def\1{\mathrm{B}}
\def\2{\mathrm{C}}
\def\3{\mathrm{D}}
\def\4{\mathrm{E}}
\def\5{\mathrm{F}}
\begin{array}{|@{\ }c@{\ }c@{\ }|@{\ }c@{\ }c@{\ }|@{\ }c@{\ }c@{\ }|@{\ }c@{\ }c@{\ }|@{\ }c@{\ }c@{\ }|@{\ }c@{\ }c@{\ }|@{\ }c@{\ }c@{\ }|@{\ }c@{\ }c@{\ }|} 
\hline
 1 & 0 & 2 & 5 & 6 &\1 & 4 &\3 &\5 & 8 &\0 & 9 &\2 & 3 &\4 & 7\\
 0 & 1 & 5 & 2 &\1 & 6 &\3 & 4 & 8 &\5 & 9 &\0 & 3 &\2 & 7 &\4\\\hline
 2 & 3 & 7 & 0 & 4 & 9 & 6 &\5 &\0 &\3 &\1 & 8 &\4 & 1 &\2 & 5\\
 3 & 2 & 0 & 7 & 9 & 4 &\5 & 6 &\3 &\0 & 8 &\1 & 1 &\4 & 5 &\2\\\hline
 4 & 5 &\0 & 1 & 0 &\5 & 9 &\4 & 2 &\1 &\2 &\3 & 8 & 7 & 3 & 6\\
 5 & 4 & 1 &\0 &\5 & 0 &\4 & 9 &\1 & 2 &\3 &\2 & 7 & 8 & 6 & 3\\\hline
\0 & 7 & 4 & 3 &\3 &\4 & 0 &\1 &\2 & 9 & 2 &\5 & 5 & 6 & 8 & 1\\
 7 &\0 & 3 & 4 &\4 &\3 &\1 & 0 & 9 &\2 &\5 & 2 & 6 & 5 & 1 & 8\\\hline
 8 & 9 &\5 &\4 &\2 & 7 &\0 & 3 & 0 & 5 & 1 & 6 & 2 &\3 & 4 &\1\\
 9 & 8 &\4 &\5 & 7 &\2 & 3 &\0 & 5 & 0 & 6 & 1 &\3 & 2 &\1 & 4\\\hline
\1 &\4 & 8 &\3 &\0 & 5 &\2 & 1 & 7 & 6 & 0 & 3 & 4 &\5 & 2 & 9\\
\4 &\1 &\3 & 8 & 5 &\0 & 1 &\2 & 6 & 7 & 3 & 0 &\5 & 4 & 9 & 2\\\hline
 6 &\3 &\2 &\1 & 3 & 8 & 2 & 7 & 4 & 1 &\4 & 5 & 9 & 0 &\0 &\5\\
\3 & 6 &\1 &\2 & 8 & 3 & 7 & 2 & 1 & 4 & 5 &\4 & 0 & 9 &\5 &\0\\\hline
\2 &\5 & 6 & 9 & 2 & 1 & 5 & 8 &\4 & 3 & 4 & 7 &\0 &\1 &\3 & 0\\
\5 &\2 & 9 & 6 & 1 & 2 & 8 & 5 & 3 &\4 & 7 & 4 &\1 &\0 & 0 &\3\\\hline
 \end{array}
$$
\caption{A $K_{3,3}$-free (universally noncommutative) latin square of order $16$}
\label{t:16}
\end{figure}
\section{\texorpdfstring{$K_{4,4}$}{K44}-free pairs of orthogonal latin squares}\label{s:ort}
The main problem of the existence of $K_{3,3}$-free latin squares 
can be generalized to sets of mutually orthogonal latin squares.
It is known that a set of $k$, $1\le k \le n-2$, mutually orthogonal latin squares (MOLS)
corresponds to a
strongly regular
graph.
According to \cite[Theorem~3]{KMP:16:trades}, an eigenfunction of this graph corresponding to the eigenvalue $-k-2$ has the minimum number $2(k+2)$ of nonzeros only if these nonzeros
induce a subgraph isomorphic to $K_{k+2,k+2}$.
The study of $K_{k+2,k+2}$-free collections of $k$ MOLS is nontrivial even for the linear case,
when $n$ is a prime power and all latin squares are linear functions
over GF$(n)$:
among the linear pairs $(G,L)$ of orthogonal latin squares over GF$(n)$, where $n\ge 7$ is an odd prime power,
there are $K_{4,4}$-free non-$K_{4,4}$-free pairs, see Theorem~\ref{th:K44nK44} below.

We first adapt the terminology to pairs of orthogonal latin squares.
Such a pair $(A,B)$ or order~$n$ will also be treated as the set $\{(r,c,G(r,c),L(r,c)):\ c\in C, r\in R \}$ of ordered quadruples;
the graph $\Gamma(A,B)$ of this pair has this set as the vertex set, two quadruples being adjacent
if they coincide in one position.
A set of $8$ elements of a pair $(A,B)$ of orthogonal latin squares is called
a $K_{4,4}$-pattern if it induces a subgraph of $\Gamma(A,B)$ isomorphic to $K_{4,4}$. If there are no such patterns, the pair $(A,B)$ is called  \emph{$K_{4,4}$-free}.

Two pairs of orthogonal latin squares
$(A,B)\subset {R} \times {C} \times {G} \times {L}$ and
$(A',B')\subset {R'} \times {C'} \times {G'} \times {L'}$ are \emph{isotopic} if there are bijections $\rho:R\to R'$, $\gamma:C\to C'$, $\delta:G\to G'$, and $\lambda:L\to L'$
such that
$(A',B') = \{(\rho r,\gamma c, \delta g, \lambda l):\ (r,c,g,l) \in (A,B)\}$.
If
$(A',B') = \{\pi(\rho r,\gamma c, \delta g, \lambda l):\ (r,c,g,l) \in (A,B)\}$,
where $\pi$ is one of $4!$ permutations of coordinates,
then $(A,B)$ and $(A',B')$ are called \emph{paratopic}.


In Fig.~\ref{t:K44}, we can see examples of $K_{4,4}$-patterns in linear pairs of orthogonal latin squares of order~$4$ and~$5$.
Note that there are two types of $K_{4,4}$ subgraphs in a graph of a pair of orthogonal latin squares.
To distinguish these two types, we define the color
of an edge as follows: in the graph of two orthogonal latin squares $G$ and $L$, a pair of adjacent vertices
$(r,c,g,l)$ and $(r',c',g',l')$
have color $1$, $2$, $3$, or~$4$ if
$r=r'$, $c=c'$, $g=g'$, or $l=l'$, respectively.
In a $K_{4,4}$-subgraph, the edges of each color
form a $1$-factor;
the edges of any two colors
form a $2$-factor, which is a $8$-cycle or the union
of two $4$-cycles. We define the type of a $K_{4,4}$-subgraph as the number of two-color $8$-cycles in it.
\begin{lemma}
 There are only two possible types, type~$0$  and type~$4$.
\end{lemma}
Examples of type-0 and type-4 patterns can be found
in Fig.~\ref{t:K44} (the $4\times 4$ square and the two $5\times 5$ squares, respectively).
\begin{proof}
Let $P_0$ and $P_1$ be the bipartite parts of~$K_{4,4}$.
Straightforwardly,
the set $(r,c,l)$, where $r\in P_0$, $c\in P_1$,
and $l$ is the color of the edge $\{r,c\}$,
is a latin square of order~$4$.
The two isotopy classes
of latin squares of order~$4$
correspond to the two types of colorings, which can be checked directly.
\end{proof}
\begin{figure}
 $$
  \begin{array}{|c@{\ }c@{\ }c@{\ }c|}
  \hline
{\color{green!60!black}\boldsymbol\alpha{ }\boldsymbol{A}} & {\color{blue}\boldsymbol\beta{ }\boldsymbol{C}} & {\gamma{ }D} & {\delta{ }B} \\
{\beta{ }B} & {\alpha{ }D} & {\color{green!60!black}\boldsymbol\delta{ }\boldsymbol{C}} & {\color{blue}\boldsymbol\gamma{ }\boldsymbol{A}} \\
{\gamma{ }C} & {\delta{ }A} & {\color{blue}\boldsymbol\alpha{ }\boldsymbol{B}} & {\color{green!60!black}\boldsymbol\beta{ }\boldsymbol{D}} \\
{\color{blue}\boldsymbol\delta{ }\boldsymbol{D}} & {\color{green!60!black}\boldsymbol\gamma{ }\boldsymbol{B}} & {\beta{ }A} & {\alpha{ }C} \\ \hline
   \end{array}
   \quad
   \begin{array}{|c@{\ }c@{\ }c@{\ }c@{\ }c|}
   \hline
{\alpha{ }A} & {\beta{ }C} & {\gamma{ }E} & {\delta{ }B} & {\varepsilon{ }D} \\
{\beta{ }B} & {\color{green!60!black}\boldsymbol\gamma{ }\boldsymbol{D}} & {\delta{ }A} & {\color{blue}\boldsymbol\varepsilon{ }\boldsymbol{C}} & {\alpha{ }E} \\
{\gamma{ }C} & {\color{blue}\boldsymbol\delta{ }\boldsymbol{E}} & {\color{green!60!black}\boldsymbol\varepsilon{ }\boldsymbol{B}} & {\alpha{ }D} & {\beta{ }A} \\
{\delta{ }D} & {\varepsilon{ }A} & {\alpha{ }C} & {\color{green!60!black}\boldsymbol\beta{ }\boldsymbol{E}} & {\color{blue}\boldsymbol\gamma{ }\boldsymbol{B}} \\
{\varepsilon{ }E} & {\alpha{ }B} & {\color{blue}\boldsymbol\beta{ }\boldsymbol{D}} & {\gamma{ }A} & {\color{green!60!black}\boldsymbol\delta{ }\boldsymbol{C}} \\ \hline
    \end{array}
    \quad
  \begin{array}{|c@{\ }c@{\ }c@{\ }c@{\ }c|}
  \hline
{\alpha{ }{A}} & {\color{green!60!black}\boldsymbol\beta{ }\boldsymbol{E}} & {\color{blue}\boldsymbol\gamma{ }\boldsymbol{D}} & {\delta{ }{C}} & {\varepsilon{ }{B}} \\
{\color{blue}\boldsymbol\beta{ }\boldsymbol{B}} & {\gamma{ }{A}} & {\delta{ }{E}} & {\color{green!60!black}\boldsymbol\varepsilon{ }\boldsymbol{D}} & {\alpha{ }{C}} \\
{\color{green!60!black}\boldsymbol\gamma{ }\boldsymbol{C}} & {\delta{ }{B}} & {\varepsilon{ }{A}} & {\color{blue}\boldsymbol\alpha{ }\boldsymbol{E}} & {\beta{ }{D}} \\
{\delta{ }{D}} & {\color{blue}\boldsymbol\varepsilon{ }\boldsymbol{C}} & {\color{green!60!black}\boldsymbol\alpha{ }\boldsymbol{B}} & {\beta{ }{A}} & {\gamma{ }{E}} \\
{\varepsilon{ }{E}} & {\alpha{ }{D}} & {\beta{ }{C}} & {\gamma{ }{B}} & {\delta{ }{A}} \\ \hline
   \end{array}
$$
\caption{In the (unique, up to equivalence) pair $(G,L)$ of orthogonal latin squares of order~$4$, every pair of transversals induce $K_{4,4}$. In the (unique, up to equivalence) pair $(G,L)$ of orthogonal latin squares of order~$5$, the difference between two transversals through the same element induces $K_{4,4}$. In the tables, the values of $G$ and $L$ are indicated by Greek and Latin letters, respectively (pairs of orthogonal latin squares are also called Graeco-Latin squares).
}\label{t:K44}
\end{figure}

\begin{theorem}\label{th:K44nK44}
 Let $G,L_\kappa:\mathrm{GF}(q)^2\to \mathrm{GF}(q)$
 be two orthogonal latin squares of order~$q$,
 where $q$ is a prime power,
 \begin{equation*}\label{eq:GLk}
   G(r,c)=r+c,\quad
 L_\kappa(r,c)=r+\kappa c,\qquad \kappa \in \mathrm{GF}(q)\setminus \{0,1\}.
 \end{equation*}
 \begin{itemize}
  \item[\rm(i)] The pair $(G,L_\kappa)$ contains a forbidden pattern
 of type~$4$ if and only if $q$ is odd and $\alpha\in\{2,1/2,-1\}$.
  \item[\rm(ii)] The pair $(G,L_\kappa)$ contains a forbidden pattern
 of type~$0$ if and only if $q$ is even.
 \end{itemize}
\end{theorem}

\begin{proof} (i) \emph{Only if.}
We first consider the case when
\begin{itemize}
 \item[(a)] in the $K_{4,4}$-subgraph,
the edges of colors $1$ and~$2$ form two $4$-cycles and the edges of colors $3$ and~$4$ form two $4$-cycles in the $K_{4,4}$-subgraph;
 \item[(b)] the edges of the colors from any other pair ($1$ and~$3$, $1$ and~$4$, $2$ and~$3$, $2$ and~$4$) form a $8$-cycle in the $K_{4,4}$-subgraph.
\end{itemize}
We are going to show that the pattern is similar to the one in the right square in Fig.~\ref{t:K44}, which can illustrate the considerations below.
 Let the two $4$-cycles formed by edges of colors
 $1$ and $2$ be $v_{00}{-}v_{10}{-}v_{01}{-}v_{11}{-}v_{00}$ and
 $v_{22}{-}v_{32}{-}v_{23}{-}v_{33}{-}v_{22}$, where
$$
 \begin{array}{c@{}cc@{}c}
 &v_{22}=(r_2,c_2,\beta',E'), & v_{23}=(r_2,c_3,\gamma',D'), & \\
 v_{00}=(r_0,c_0,\beta,B), &&&
 v_{01}=(r_0,c_1,\varepsilon,D), \\
 v_{10}=(r_1,c_0,\gamma,C), &&&
 v_{11}=(r_1,c_1,\alpha,E), \\
 &v_{32}=(r_3,c_2,\varepsilon',C'), & v_{33}=(r_3,c_3,\alpha',B'). & \\
 \end{array}
$$
Obviously,
 $r_0$, $r_1$, $r_2$, $r_3$ are pairwise different;
 $c_0$, $c_1$, $c_2$, $c_3$ are pairwise different;
 $\beta$, $\gamma$, $\varepsilon$, $\alpha$ are pairwise different
 and $\{\beta$, $\gamma$, $\varepsilon$, $\alpha\}=\{\beta'$, $\varepsilon'$, $\gamma'$, $\alpha'\}$;
 $B$, $C$, $D$, $E$ are pairwise different
 and $\{B$, $C$, $D$, $E\}=\{E'$, $C'$, $D'$, $B'\}$.

 Without loss of generality, we assume $\beta=\beta'$,
 which means that the bipartite parts of $K_{4,4}$ are
 $\{v_{00},v_{11},v_{23},v_{32}\}$ and $\{v_{01},v_{10},v_{22},v_{33}\}$. Now, if $\gamma=\varepsilon'$, then we have a $4$-cycle
 $v_{00}{-}v_{10}{-}v_{32}{-}v_{22}{-}v_{00}$ with colors $2$ and~$3$ of the edges, which contradicts condition~(b) above. Hence,
 $\gamma\not=\varepsilon'$ and $C=C'$. Similarly, considering the $4$-cycles
$v_{00}{-}v_{01}{-}v_{23}{-}v_{22}{-}v_{00}$ and
$v_{01}{-}v_{11}{-}v_{33}{-}v_{23}{-}v_{01}$,
we get $D=D'$ and $\alpha=\alpha'$.

Now, since $\beta=\beta'$, $\gamma\not=\varepsilon'$, $\alpha=\alpha'$,  and
$\{\beta$, $\gamma$, $\varepsilon$, $\alpha\}=\{\beta'$, $\varepsilon'$, $\gamma'$, $\alpha'\}$, we have also $\gamma=\gamma'$
and $\varepsilon=\varepsilon'$. Similarly, $B=B'$ and $E=E'$.

From $G(r_0,c_0)=G(r_2,c_2)$ $(=\beta)$, we have
$r_0+c_0 = r_2+c_2$; from $G(r_1,c_0)=G(r_2,c_3)$ $(=\gamma)$, we have
$r_1+c_0 = r_2+c_3$. We derive
\begin{equation}\label{eq:1032}
 r_0-r_1 = c_2-c_3.
\end{equation}
On the other hand,
from $L_\kappa(r_0,c_0)=L_\kappa(r_3,c_3)$ $(=B)$
we have
$r_0+\kappa c_0 = r_3+\kappa c_3$;
from $L_\kappa(r_1,c_0)=L_\kappa(r_3,c_2)$ $(=C)$
we have
$r_1+\kappa c_0 = r_3+\kappa c_2$.
We derive
\begin{equation}\label{eq:1023}
 r_0-r_1 = \kappa (c_3-c_2).
\end{equation}
From \eqref{eq:1032} and \eqref{eq:1023}, we find $\kappa = -1$.

Now assume that, instead of (a) and (b), we have that the pairs of colors inducing two $4$-cycles in the $K_{4,4}$-subgraph
are $(1,3)$ and $(2,4)$. Since
$$
\left\{\begin{array}{ll}
 g=r+c\\
 l=r+\kappa c
\end{array}\right.
\quad\mbox{is equivalent to }
\left\{\begin{array}{ll}
 r=l+(-\kappa)c\\
 g=l+(1-\kappa) c
\end{array}\right. ,
$$
the pair $(G,L_\kappa)$ is paratopic to
$(G,L_{\kappa'})$ with $\kappa'=\frac{1-\kappa}{-\kappa}=1-\frac{1}{\kappa}$, where $(G,L_{\kappa'})$ satisfies (a) and~(b). We conclude $\kappa'=-1$ and $\kappa=2$.

Similarly, the remaining case ($4$-cycles corresponding to the pairs of colors $(1,4)$ and $(2,3)$) leads to $\kappa=1/2$. It remains to note that in all the cases, even~$q$ leads to a contradiction ($\kappa=-1 \equiv 1 \bmod 2$ and $\kappa=2 \equiv 0 \bmod 2$ contradict the definition of orthogonal latin squares, $\kappa=1/2$ does not exist).

(i) \emph{If.}
In the case $\kappa=-1$, the following eight vertices induce $K_{3,3}$:
 $$
 \begin{array}{cccc}
 &(0,1,1,-1), &
  (0,1+x,1,-1{-}x), & \\
 \makebox[1mm][c]{$(1,0,1,1)$,} &&&
 \makebox[1mm][c]{$(1,2{+}x,3{+}x,-1{-}x),$} \\
 \makebox[1mm][c]{$(1{+}x,0,1{+}x,1{+}x)$,} &&&
 \makebox[1mm][c]{$(1{+}x,2{+}x,3{+}2x,-1),$} \\
 & (2{+}x,1,3{+}x,1{+}x), &
 (2{+}x,1{+}x,3{+}2x,1), &
 \end{array}
$$ where $0$, $1$, $1+x$, $2+x$ are mutually different (for example, $x=1$ if the prime divisor of~$q$ is larger than~$3$ and $x\in\mathrm{GF}(q)\backslash\{0,1,2\}$ if $3$ divides~$q$).
From this example, using the paratopisms described in the end of the ``only if'' part of the proof, we obtain examples for $\kappa=2$ and $\kappa=1/2$.

(ii) \emph{Only if.}
Instead of (a) and (b), we now have
\begin{itemize}
 \item[(c)]
the edges of any two colors from $1$, $2$, $3$, $4$ form two $4$-cycles in the $K_{4,4}$-subgraph.
\end{itemize}
Defining $v_{00}$, \ldots, $v_{33}$ as before,
we now derive from~(c)
that $\varepsilon'=\gamma$,
$\gamma'=\varepsilon$,
$\alpha'=\alpha$, $B'=B$,
$E'=E$, $C'=D$, $D'=C$.
From $G(r_0,c_0)=G(r_2,c_2)$
and $G(r_1,c_0)=G(r_3,c_2)$,
we have $r_3-r_1=r_2-r_0$.
From $L(r_0,c_0)=L(r_3,c_3)$
and $L(r_1,c_0)=L(r_2,c_3)$,
we have $r_3-r_0 = r_2-r_1$.
In summary, we get $r_0-r_1=r_1-r_0$,
which can only be possible if $q$ is even.

(ii) \emph{If.} For $q$ a power of~$2$,
eight elements of $(G,L_\kappa)$ inducing $K_{4,4}$
are represented as columns of the following array:
$$
\begin{array}{r||c|c|c|c||c|c|c|c|}
            & v_{00} & v_{11}          & v_{23}      & v_{32}          & v_{01}          & v_{10}     & v_{22}   & v_{33}          \\\hline
 r          & 0      & 1+\kappa^2      & 1           & \kappa^2        & 0               & 1+\kappa^2 & 1        & \kappa^2        \\
 c          & 0      & 1+\kappa        & \kappa      & 1               & 1+\kappa        & 0          & 1        & \kappa          \\
 r+c        & 0      & \kappa+\kappa^2 & 1+\kappa    & 1+\kappa^2      & 1+\kappa        & 1+\kappa^2 & 0        & \kappa+\kappa^2 \\
 r+\kappa c & 0      & 1+\kappa        & 1+\kappa^2  & \kappa+\kappa^2 & \kappa+\kappa^2 & 1+\kappa^2 & 1+\kappa & 0               \\\hline
\end{array}
$$

\end{proof}

\begin{corollary}
For every odd prome power $q\ge 7$, there exists a linear $K_{4,4}$-free pair of orthogonal latin squares. For every prome power $q\ge 4$, there exists a linear pair of orthogonal latin squares containing a $K_{4,4}$-pattern.
\end{corollary}

\section{Conclusion}
As we see,
$K_{3,3}$-free latin squares are very rare objects,
at least for small orders.
Apart from the two known such squares of order~$8$,
we have constructed one of order~$16$, and we have also found
two very symmetric $K_{3,3}$-free latin $(n-1)\times n$
rectangles, $n=5$ and~$9$ (and two such rectangles with $n=12$, with few symmetries). Generalization of such objects
to higher orders remains an open problem.

The situation with $K_{4,4}$-patterns in pairs
of orthogonal latin squares seems to be different.
We have proved the existence of
$K_{4,4}$-free pairs of orthogonal latin squares
for each odd prime power order larger than~$5$.
Generalizations to other orders and to systems of more than two
orthogonal latin squares would be very interesting.

\section*{Acknowledgments}
D. Krotov is supported by the Natural Science Foundation of Hebei Province (A2023205045) and within the framework of the state contract of the Sobolev Institute of Mathematics
(FWNF-2026-0011).


\providecommand\href[2]{#2} \providecommand\url[1]{\href{#1}{#1}}
  \def\DOI#1{{\href{https://doi.org/#1}{https://doi.org/#1}}}\def\DOIURL#1#2{{\href{https://doi.org/#2}{https://doi.org/#1}}}

\end{document}